\numberwithin{equation}{section}
\date{March 6, 2020}
\begin{document}

\title{A finiteness theorem for holonomic DQ-modules on Poisson manifolds}

\author{Masaki Kashiwara and Pierre Schapira%
}

\maketitle

\begin{abstract}
On a complex symplectic manifold, 
we prove a finiteness result for the global sections of solutions of holonomic \DQ-modules in two cases: (a) by assuming that there exists a Poisson compactification  (b) in the algebraic case. This extends our previous result of~\cite{KS12} in which the symplectic manifold  was compact. 
The main tool is  a finiteness theorem for $\R$-constructible sheaves on a real analytic manifold in a non proper situation. 
\end{abstract}

{\renewcommand{\thefootnote}{\mbox{}}
\footnote{Key words: deformation quantization, holonomic modules, microlocal sheaf theory, constructible sheaves}
\footnote{MSC: 53D55, 35A27, 19L10, 32C38}
\footnote{The research of M.K 
was supported by Grant-in-Aid for Scientific Research (X) 15H03608, Japan Society for the Promotion of Science.}
\footnote{The research of P.S was supported by the  ANR-15-CE40-0007 ``MICROLOCAL''.}
\addtocounter{footnote}{-3}
}

\tableofcontents

\section{Introduction and statement of the results}
Consider a complex Poisson manifold $X$   of complex dimension $d_X$  endowed with a DQ-algebroid $\A[X]$.
Recall that $\A[X]$ is a $\C\forl$-algebroid 
locally isomorphic  to a star algebra $(\sho_X\forl,\star)$ to which the Poisson structure is associated. Denote by $\Ah[X]$ the localization of $\A[X]$ with respect to  $\hbar$, a $\C\Ls$-algebroid. For short, we set
\eqn
&&\Chbar\eqdot\C\forl,\quad \Chbarl\eqdot\C\Ls.
\eneqn
Hence  $\Ah[X]\simeq\Chbarl\tens[\Chbar]\A$. 
The  algebroids $\A[X]$ and $\Ah[X]$ are right and left Noetherian (in particular coherent) and if $\shm$ is a (say left) coherent $\Ah[X]$-module, then its support is a closed complex analytic subvariety of $X$ and it follows from Gabber's theorem that it is co-isotropic. In the extreme case where $X$ is symplectic and the support is Lagrangian, one says that $\shm$ is holonomic. 

Recall the following definitions (see~\cite{KS12}*{Def.~2.3.14,~2.3.16 and~2.7.2}).
\banum
\item
A coherent $\A[X]$-submodule $\shm_0$ of a coherent $\Ah[X]$-module $\shm$ is called
an $\A[X]$-lattice of~$\shm$ if $\shm_0$ generates $\shm$.
\item
A coherent $\Ah[X]$-module $\shm$ is good
if, for any relatively compact open subset $U$ of~$X$, there exists
an $(\A[X]\vert_U)$-lattice of~$\shm\vert_U$.
\item
One denotes by $\RD^\Rb_{\gd}(\Ah[X])$ the full subcategory of~$\RD^\Rb_{\coh}(\Ah[X])$ consisting of objects with  good  cohomology. 
\item
In the algebraic case (see below) a coherent $\Ah$-module $\shm$ is called algebraically good
if there exists an $\A[X]$-lattice of~$\shm$. One still denotes by $\RD^\Rb_{\gd}(\Ah[X])$ the full subcategory of~$\RD^\Rb_{\coh}(\Ah[X])$ consisting of objects with  algebraically good  cohomology. 
\eanum
 Let $Y\subset X$. We shall consider the hypothesis
\eq\label{hyp:Ysalc}&&
\parbox{75ex}{
$Y$ is open, relatively compact, subanalytic in $X$ and the Poisson structure on $X$ is symplectic on $Y$.
}\eneq
\begin{example}
Denote  by $\Xns$ the closed complex subvariety of $X$  consisting of points where the Poisson bracket is not symplectic and set $Y=X\setminus\Xns$. 
Hence $Y$ is an open subanalytic subset of $X$ and is symplectic. 
If $X$ is compact, then $Y$ satisfies hypothesis~\eqref{hyp:Ysalc}. 
\end{example}
In this paper we shall prove the following theorem which extends~\cite{KS12}*{Th.~7.2.3} in which $X$ was symplectic, that is, $Y=X$.

\begin{theorem}\label{th:Main1}
Assume  that  $Y$ satisfies  hypothesis~\eqref{hyp:Ysalc}.
Let~$\shm$ and $\shl$ belong to $\Derb_\gd(\Ah)$ and assume that both   $\shm\vert_Y$ and $\shl\vert_Y$  are holonomic. Then the two complexes $\RHom[{\Ah[Y]}](\shm\vert_Y,\shl\vert_Y)$ and $\rsect_c(Y;\rhom[{\Ah[Y]}](\shl\vert_Y,\shm\vert_Y))\,[d_X]$ 
have finite dimensional cohomology over $ \Chbarl$ and are dual to each other. 
\end{theorem}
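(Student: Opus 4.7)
The plan is to reduce the statement to one about $\R$-constructible complexes of sheaves of $\Chbarl$-modules on $Y$, and then to invoke the non-proper finiteness theorem for such sheaves on a real analytic manifold that the abstract advertises as the main tool of the paper.

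I first set $F \eqdot \rhom[{\Ah[Y]}](\shm\vert_Y,\shl\vert_Y)$ and $G \eqdot \rhom[{\Ah[Y]}](\shl\vert_Y,\shm\vert_Y)$. Since $Y$ is complex symplectic and both $\shm\vert_Y$ and $\shl\vert_Y$ are holonomic, the constructibility theorem for holonomic DQ-modules from \cite{KS12} yields that $F$ and $G$ are $\R$-constructible complexes of $\Chbarl$-modules on $Y$. Moreover, the local form of the duality established in \cite{KS12}*{Th.~7.2.3} identifies $G[d_X]$ with the Poincaré--Verdier dual $\mathrm{D}_Y F$ of $F$ on $Y$; the orientation twist is trivial since $Y$ is a complex manifold.

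With these identifications, the two complexes of the statement become $\rsect(Y;F)$ and $\rsect_c(Y;\mathrm{D}_Y F)$. Once both are known to have finite-dimensional cohomology over $\Chbarl$, the natural pairing of Poincaré--Verdier duality on $Y$ supplies the announced duality between them. The theorem therefore reduces to proving the finite-dimensionality of $\rsect(Y;F)$ and of $\rsect_c(Y;\mathrm{D}_Y F)$ over $\Chbarl$.

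For this finiteness step, hypothesis~\eqref{hyp:Ysalc} is essential: $Y$ is open, subanalytic and relatively compact in $X$, so the pair $(X,Y)$ is a natural setting for the non-proper finiteness theorem. Combining this with the goodness of $\shm$ and $\shl$ on all of $X$, I expect that $F$ can be extended or dominated by an $\R$-constructible sheaf on $X$ that controls its behaviour near $\partial Y$; applying the non-proper finiteness theorem on the real analytic manifold $X$ with respect to the subanalytic subset $Y$ then yields the required perfectness.

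I anticipate that this last step will be the main obstacle, namely translating the globally good (but across $\partial Y$ generally non-holonomic) behaviour of $\shm$ and $\shl$ on $X$ into the constructibility-type hypothesis on the ambient $X$ required to apply the non-proper finiteness theorem. Once this finiteness is secured, the duality part of the statement should follow essentially formally from Poincaré--Verdier duality on $Y$ together with the identification $G[d_X] \simeq \mathrm{D}_Y F$.
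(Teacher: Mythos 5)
Your overall frame is the same as the paper's (reduce to the non-proper finiteness result, Theorem~\ref{th:sa}/Corollary~\ref{cor:finite1}, after using Theorem~\ref{th:723} for constructibility and the local duality $G[d_X]\simeq\RD_Y F$ on $Y$), but the step you yourself flag as ``the main obstacle'' is precisely the mathematical content of the proof, and it is left unproved. Moreover the mechanism you propose for it --- ``extend or dominate $F$ by an $\R$-constructible sheaf on $X$'' --- is not what Theorem~\ref{th:sa} asks for and is not available in general: the example of Section~5.3 exhibits a good $\shm$ and a simple $\shl$ for which $\rhom[\A](\shm,\shl^\loc)$ is \emph{not} constructible on $X$, so no constructible extension across $\partial Y$ exists. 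What the non-proper theorem actually requires is only a bound of $\SSi(F)$ by a Lagrangian of $T^*Y$ which is \emph{subanalytic in the ambient cotangent bundle}, and the whole point is how goodness of $\shm$ and $\shl$ on $X$ produces such a bound.

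The paper does this as follows. By the diagonal procedure one reduces to $\shl=\shl_0^\loc$ with $\shl_0$ simple along a smooth Lagrangian $\Lambda$, so that all Hom complexes live on $\Lambda$. Goodness is then used to choose an $\AL$-lattice $\shn$ of $\shm$ (Definition~\ref{def:ALlattice}), defined across $\partial Y$, and one sets $F_0=\rhom[\AL](\shn,\shl_0)$, a complex on all of $\Lambda$ with $F\simeq F_0^\loc$. Proposition~\ref{pro:SScar2b} gives $\SSi(F_0)=\chv(\grl\shn)$, and Corollary~\ref{cor:ayl0} (resting on the Noetherianity and finite homological dimension of $\grr^F\AL$, Lemma~\ref{le:grAL1} and Proposition~\ref{pro:AL1}, which are new in the Poisson setting) shows this is a closed $\C^\times$-conic complex analytic subset of $T^*\Lambda$, hence subanalytic there; over $Y$ it is Lagrangian by holonomicity. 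Finally Proposition~\ref{pro:ssloc} gives $\SSi(F)\subset\SSi(F_0)$, so $F\vert_{Y\cap\Lambda}$ satisfies the hypotheses of Corollary~\ref{cor:finite1}, which yields both the finiteness and the duality (note that the duality between $\rsect(Y;F)$ and $\rsect_c(Y;\RD_Y F)$ is itself an output of the constructible extension via Proposition~\ref{pro:dualrc}, not a formal consequence of Verdier duality on $Y$ once finiteness is known). None of this lattice/microsupport argument appears in your proposal, so as it stands the proof has a genuine gap at its central step.
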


We shall also obtain a similar conclusion under  rather different hypotheses, namely that $X=Y$ is symplectic  and all data are algebraic  (see~\cite{KS12}*{\S~2.7}).
 Let $X$ be a smooth algebraic variety and  let 
$\A$ be a   DQ-algebroid on $X$.
We denote by $\Xan$ the associated complex analytic manifold and $\A[\Xan]$ the associated DQ-algebroid on $\Xan$ (see Lemma~\ref{lem:analy}). 
For a coherent 
$\Ah$-module $\shm$ we denote by $\shm_\an$ its image by the natural functor $\Derb_\coh(\Ah)\to\Derb_\coh(\Ah[\Xan])$. 

\begin{theorem}\label{th:Main2}.
Let $X$ be a quasi-compact separated smooth symplectic algebraic variety over $\C$ endowed with the Zariski topology. 
Let~$\shm$ and $\shl$ belong to $\Derb_\gd(\Ah)$. Then the two complexes  $\RHom[{\Ah[\Xan]}](\shm_\an,\shl_\an)$ 
 and $\rsect_c(\Xan;\rhom[{\Ah[\Xan]}](\shl_\an,\shm_\an))\,[d_X]$ 
have finite dimensional cohomology over $ \Chbarl$ and are dual to each other. 
\end{theorem}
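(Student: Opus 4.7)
The plan is to reduce Theorem~\ref{th:Main2} to Theorem~\ref{th:Main1} by constructing an analytic Poisson compactification of $\Xan$ carrying extensions of the DQ-algebroid and of the modules, and then invoking Theorem~\ref{th:Main1} there.

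First, using Nagata's compactification theorem together with Hironaka's resolution of singularities, I would choose an open immersion $j\colon X\hookrightarrow\bar X$ into a smooth projective variety $\bar X$ with $D := \bar X\setminus X$ a simple normal crossing divisor; then $\Xan$ is a relatively compact, subanalytic open subset of the compact complex manifold $\bar X^{\mathrm{an}}$. The Poisson bivector associated with the symplectic form on $X$ extends as a meromorphic section of $\wedge^{2}T_{\bar X}$ with poles along $D$; after further blowups (principalizing the polar divisor and adjusting the boundary) one can arrange this bivector to become holomorphic on $\bar X^{\mathrm{an}}$, producing a Poisson structure on $\bar X^{\mathrm{an}}$ whose restriction to $\Xan$ is the given symplectic one. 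Consequently $Y := \Xan$ satisfies hypothesis~\eqref{hyp:Ysalc} inside the Poisson manifold $\bar X^{\mathrm{an}}$.

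Next, using the cohomological classification of DQ-algebroids quantizing a fixed Poisson structure (the obstructions being local and trivialized by a partition of unity near $D$), one extends $\A[\Xan]$ to a DQ-algebroid $\bar{\A}$ on $\bar X^{\mathrm{an}}$. The algebraic global $\A$-lattices $\shm_0\subset\shm$ and $\shl_0\subset\shl$ provided by algebraic goodness are algebraic coherent $\A$-modules on $X$; taking an algebraic coherent extension of the underlying $\sho_X$-modules inside the quasi-coherent sheaves $j_{\ast}\shm_0,\,j_{\ast}\shl_0$ on $\bar X$, analytifying, and equipping the result with the $\bar{\A}$-action (which restricts to the given one on $\Xan$ and extends coherently across $D$), one obtains coherent $\bar{\A}$-modules $\bar{\shm}_0,\bar{\shl}_0$ whose $\hbar$-localizations give objects $\bar{\shm},\bar{\shl}\in\Derb_\gd(\bar{\A}^{\mathrm{loc}})$ restricting on $Y$ to $\shm_\an,\shl_\an$. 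Applying Theorem~\ref{th:Main1} to the quadruple $(\bar X^{\mathrm{an}},Y,\bar{\shm},\bar{\shl})$ then yields both the finite-dimensionality and the duality asserted in Theorem~\ref{th:Main2}.

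The main obstacle is the coupled construction of the Poisson compactification and of the compatible DQ-algebroid $\bar{\A}$: the extended Poisson bivector may acquire essential poles along $D$ that only repeated blowups can tame, and the cohomological obstructions to extending the quantization across the resolved boundary must be carefully tracked. The coherent extension of the $\A$-lattices across $D$ is also delicate, since the DQ-module setting lacks a direct analogue of the quasi-coherent algebraic $j_{\ast}$ and must be handled via an $\sho$-module reduction on $\bar X$ followed by an analytic extension argument for the quantized action.
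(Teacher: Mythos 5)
Your reduction of Theorem~\ref{th:Main2} to Theorem~\ref{th:Main1} hinges on constructions that are not available, and the key steps as you describe them would fail. First, the claim that after blowups the meromorphic Poisson bivector on $\bar X$ becomes holomorphic is unjustified: blowing up does not remove the poles of a bivector along the boundary (the pole persists along the strict transform of the polar divisor), and there is no general theorem producing a smooth Poisson compactification of a symplectic algebraic variety. This is precisely why the algebraic case is stated as a separate theorem rather than being subsumed under hypothesis~\eqref{hyp:Ysalc}. Second, even granting a Poisson compactification, extending $\A[\Xan]$ to a DQ-algebroid $\bar\A$ on $\bar X^{\mathrm{an}}$ cannot be done by ``trivializing obstructions with a partition of unity near $D$'': the setting is holomorphic, partitions of unity do not exist, and the relevant obstruction classes live in analytic cohomology groups that are in general nonzero; Kontsevich's theorem gives existence of \emph{some} quantization of a Poisson manifold, not an extension of a prescribed DQ-algebroid from an open subset. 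Third, the extension of the lattices to coherent $\bar\A$-modules across $D$, with the quantized action, is asserted rather than proved, and coherent extension in the analytic category across a boundary is not automatic.

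The intended argument avoids all of this and uses the algebraicity in a much weaker way. After the diagonal reduction one may assume $\shl\simeq\shl_0^\loc$ with $\shl_0$ simple along a smooth algebraic Lagrangian $\Lambda$, and the algebraic goodness of $\shm$ provides an $\AL$-lattice $\shn$. Then $F_\an=\rhom[{\Ah[\Xan]}](\shm_\an,\shl_\an)$ satisfies $\SSi(F_\an)\subset\chv(\grl\shn)$ by Proposition~\ref{pro:SScar2b}, and this bound is an \emph{algebraic} Lagrangian subvariety of $T^*\Lambda$ because the lattice is algebraic. One then applies Corollary~\ref{cor:finite2}, which only requires a smooth algebraic compactification $Z$ of the base (with no Poisson or DQ structure on $Z$): the Zariski closure of the algebraic Lagrangian in $T^*Z$ is algebraic, hence subanalytic in $T^*\Zan$, and the sheaf-theoretic finiteness result (Corollary~\ref{cor:finite1}) concludes. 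In short, only the constructible solution complex and its microsupport estimate are transported to the compactification, never the quantization; your approach would need to solve a genuinely harder (and open) extension problem before it could invoke Theorem~\ref{th:Main1}.
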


The main tool in the proof of both theorems is Theorem~\ref{th:sa} below which gives a finiteness criterion for $\R$-constructible sheaves on a real analytic manifold in a non proper situation. 

This Note is motivated by the  paper~\cite{GJS19} of Sam Gunningham,  David Jordan and Pavel Safronov on Skein algebras, whose main theorem is based over such a finiteness result (see loc.\ cit.~\S~3). The proof of these authors uses a kind of Nakayama theorem in the case where $\shm$ and $\shl$ are simple modules over smooth Lagrangian varieties. 

\section{Finiteness results for constructible sheaves}
In this paper,   $\cor$  is a Noetherian
commutative ring of finite global  homological dimension.

We denote by 
$\Derb_f(\cor)$ the full triangulated subcategory of $\Derb(\cor)$ consisting of objects with finitely generated cohomology.  We denote by  $\RD$  the duality functor $\RHom(\scbul,\cor)$ and we say that two objects $A$ and $B$ of $\Derb_f(\cor)$ are dual to each other if $\RD A\simeq B$, which is equivalent to $\RD B\simeq A$.

For a sheaf of rings $\shr$, one
denotes by $\RD(\shr)$ the derived category of left $\shr$-modules. We shall also encounter the full triangulated subcategory $\RD^+(\shr)$ or $\Derb(\shr)$ of complexes whose cohomology is bounded from below or is bounded.

For  a real analytic manifold $M$, one denotes by $\Derb(\cor_M)$ the bounded derived category of sheaves of $\cor$-modules on $M$. 
We shall use the six Grothendieck operations. In particular, we denote by $\omega_M$ the dualizing complex. We also use the notations
for $F\in \Derb(\cor_M)$ 
\eqn
&&\RD_M'F\eqdot\rhom(F,\cor_M),\quad \RD_M F\eqdot\rhom(F,\omega_M).
\eneqn
Recall that an object $F$ of~$\Derb(\cor_M)$ is weakly $\R$-constructible if condition (i)  below is satisfied. If moreover condition (ii) is satisfied, then one says that $F$ is $\R$-constructible.
\bnum
\item
 there exists a subanalytic stratification
$M=\bigsqcup_{a\in A}M_a$ such that
$H^j(F)\vert_{M_a}$ is  locally constant for all $j\in\Z$ and all
$a\in A$
\item
 $H^j(F)_x$ is finitely generated for all
$x\in M$ and all $j\in\Z$.
\enum
One denotes by~$\Derb_\Rc(\cor_M)$
the full subcategory of~$\Derb(\cor_M)$ consisting of~$\R$-constructible objects.  

If $X$ is a complex analytic manifold, one defines similarly the notions of (weakly) 
 $\C$-constructible sheaf, replacing ``subanalytic''
with ``complex analytic'' and one  denotes by~$\Derb_\Cc(\cor_X)$
the full subcategory of~$\Derb(\cor_X)$
consisting of $\C$-constructible  objects. 

 We shall use the following classical result (see~\cite{KS90}*{Prop.~8.4.8 and Exe.~VIII.3}).
 
\begin{proposition}\label{pro:dualrc}
Let $F\in\Derb_\Rc(\cor_M)$ and assume that $F$ has compact support.  Then both 
objects $\rsect(M;F)$ and $\rsect(M;\RD_M F)$ belong to $\Derb_f(\cor)$ and are dual to each other. 
\end{proposition}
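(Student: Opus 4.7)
The plan is to first establish the finiteness of $\rsect(M;F)$ by a finite subanalytic triangulation of $\operatorname{supp} F$, and then to deduce both the finiteness of $\rsect(M;\RD_M F)$ and the duality as formal consequences of Poincar\'e--Verdier duality combined with biduality for $\R$-constructible sheaves.

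\medskip\noindent\textbf{Finiteness of $\rsect(M;F)$.} Since $\operatorname{supp} F$ is compact, $\rsect(M;F)\simeq\rsect_c(M;F)$. Choose a finite subanalytic triangulation of $\operatorname{supp} F$ adapted to $F$, so that each cohomology sheaf $H^j(F)$ restricts to a local system of finitely generated $\cor$-modules on every open simplex. Order the open simplices $\sigma_1,\dots,\sigma_N$ by decreasing dimension and set $K_i=\operatorname{supp} F\setminus(\sigma_1\cup\cdots\cup\sigma_i)$, a closed subset of $M$. For each $i$, the open inclusion $j_i\colon\sigma_i\hookrightarrow K_{i-1}$ yields a distinguished triangle
\[
(j_i)_!(F\vert_{\sigma_i})\longrightarrow F\vert_{K_{i-1}}\longrightarrow F\vert_{K_i}\xrightarrow{+1}.
\]
Applying $\rsect_c$ and using the identity $\rsect_c(K_{i-1};(j_i)_!(F\vert_{\sigma_i}))\simeq\rsect_c(\sigma_i;F\vert_{\sigma_i})$ reduces the question to each $\rsect_c(\sigma_i;F\vert_{\sigma_i})$. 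Since $\sigma_i$ is a contractible manifold, $F\vert_{\sigma_i}$ is a bounded complex of constant local systems with finitely generated stalks, whose compactly supported cohomology therefore lies in $\Derb_f(\cor)$. Induction on $N$ yields $\rsect_c(M;F)\in\Derb_f(\cor)$.

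\medskip\noindent\textbf{Duality and the companion finiteness.} Let $a_M\colon M\to\{\mathrm{pt}\}$. Poincar\'e--Verdier duality for $a_M$ provides a canonical isomorphism
\[
\rsect(M;\RD_M F)\;=\;(a_M)_*\RD_M F\;\simeq\;\RD\bigl((a_M)_! F\bigr)\;=\;\RD\bigl(\rsect_c(M;F)\bigr)\;=\;\RD\bigl(\rsect(M;F)\bigr),
\]
the last equality by compact support. Since $\cor$ is Noetherian of finite global homological dimension, $\RD$ preserves $\Derb_f(\cor)$, so $\rsect(M;\RD_M F)\in\Derb_f(\cor)$ as well. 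Biduality in $\Derb_f(\cor)$ (or, equivalently, applying the same identity to $\RD_M F$ and invoking the sheaf-level biduality $\RD_M\RD_M F\simeq F$, valid for $\R$-constructible $F$) gives the reverse isomorphism $\RD\bigl(\rsect(M;\RD_M F)\bigr)\simeq\rsect(M;F)$. Hence the two complexes are dual to each other.

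\medskip\noindent The only non-formal ingredient is the finiteness step, which rests on the triangulation theorem for compact subanalytic sets together with the preservation of $\Derb_\Rc(\cor_M)$ by $\RD_M$; both are standard from \cite{KS90}, after which the argument reduces to a routine d\'evissage and an application of Verdier duality.
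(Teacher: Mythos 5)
Your argument is correct. Note, however, that the paper does not prove this statement at all: it is quoted as a classical fact, with a pointer to \cite{KS90}*{Prop.~8.4.8 and Exe.~VIII.3}, so what you have written is essentially a reconstruction of the standard proof of those cited results rather than an alternative to an argument in the paper. Your two steps are exactly the expected ones: finiteness of $\rsect(M;F)\simeq\rsect_c(M;F)$ by a finite subanalytic triangulation of the compact subanalytic set $\supp F$ adapted to $F$ and a d\'evissage over the open simplices (your ordering by decreasing dimension does make each $K_i$ closed, and the triangles you write are the correct ones), and then the identification $\rsect(M;\RD_M F)\simeq\RD\bigl(\rsect_c(M;F)\bigr)$ by Poincar\'e--Verdier duality, with the reverse duality coming from biduality, either in $\Derb_f(\cor)$ (using that $\cor$ is Noetherian of finite global dimension, so bounded complexes with finitely generated cohomology are perfect) or from $\RD_M\RD_M F\simeq F$ for $\R$-constructible $F$. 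One small imprecision: on an open simplex $\sigma_i$ the restriction $F\vert_{\sigma_i}$ need not be a complex of constant sheaves, only a complex whose cohomology sheaves are constant with finitely generated stalks; the conclusion $\rsect_c(\sigma_i;F\vert_{\sigma_i})\in\Derb_f(\cor)$ still holds via the hypercohomology spectral sequence, since $H^p_c(\sigma_i;\scbul)$ of a constant sheaf on an open simplex is finitely generated and concentrated in the top degree. With that wording adjusted, the proof is complete and is the standard one.
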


For $F\in\Derb(\cor_M)$, one denotes by $\SSi(F)$ its microsupport \cite{KS90}*{Def.~5.1.2},  a
closed $\R^+$-conic ({\em i.e.,} invariant by the $\R^+$-action on~$T^*M$)
subset of~$T^*M$. Recall that this set
is involutive (one also says {\em co-isotropic}), 
see~ loc.\ cit.\ Def.~6.5.1.

\begin{theorem}\label{th:sa}
Let $j\cl U\into M$ be the embedding  of  an open subanalytic subset $U$ of $M$ and let $F\in\Derb_\Rc(\cor_U)$. Assume that 
$\SSi(F)$ is contained in a closed subanalytic  $\R^+$-conic  Lagrangian subset $\Lambda$ of $T^*U$ which is subanalytic in $T^*M$. 
Then $\roim{j}F$ and $\eim{j}F$ belong to $\Derb_\Rc(\cor_M)$. 
\end{theorem}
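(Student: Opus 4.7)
\textbf{Reduction to $\eim{j}$.} Since $\SSi(\RD_U F) = \SSi(F)^a \subset \Lambda^a$ is again a closed subanalytic $\R^+$-conic Lagrangian subset of $T^*U$ that is subanalytic in $T^*M$, and since Verdier duality gives $\RD_M \eim{j}(-) \simeq \roim{j}\RD_U(-)$, combining with biduality $\RD_U\RD_U \simeq \mathrm{id}$ on $\Derb_\Rc(\cor_U)$ and the preservation of $\Derb_\Rc(\cor_M)$ by $\RD_M$, the assertion for $\roim{j} F$ follows from the assertion for $\eim{j}(\RD_U F)$. Hence it suffices to prove $\eim{j} F \in \Derb_\Rc(\cor_M)$ in general.

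\textbf{Adapted subanalytic stratification.} Applying the existence theorem for $\mu$-stratifications (see~\cite{KS90}*{\S~8.3}) to the closed subanalytic $\R^+$-conic Lagrangian $\Lambda \subset T^*M$, refined so that $U$ and $M \setminus U$ each appear as unions of strata, one obtains a subanalytic Whitney stratification $M = \bigsqcup_{a \in A} M_a$ with $U = \bigsqcup_{a \in A_U} M_a$ for some $A_U \subset A$ and
\[
\Lambda \;\subset\; \bigcup_{a \in A} T^*_{M_a} M.
\]

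\textbf{Microsupport estimate.} The heart of the argument is the inclusion
\[
\SSi(\eim{j} F) \;\subset\; \bigcup_{a \in A} T^*_{M_a} M.
\]
Over $U$ this is immediate since $(\eim{j} F)|_U = F$ and $\SSi(F) \subset \Lambda$. Near a point $x \in M_a \subset M \setminus U$, one argues locally, using subanalytic defining functions of the strata adjacent to $M_a$ together with the microlocal propagation and cut-off estimates of~\cite{KS90}. The crucial input is the subanalyticity of $\Lambda$ in the whole of $T^*M$: it forces the covectors in $\Lambda$ whose base points converge to $\partial U$ to remain inside $\bigcup_{a \in A} T^*_{M_a} M$. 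Granted this inclusion, $\eim{j} F$ is weakly $\R$-constructible by~\cite{KS90}*{Prop.~8.4.1}.

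\textbf{Stalks, conclusion, and principal obstacle.} Since $j$ is an open embedding and $\eim{j}$ is exact, $(\eim{j} F)_x = F_x$ for $x \in U$ and $(\eim{j} F)_x = 0$ for $x \notin U$; in both cases the stalk is finitely generated over $\cor$, whence $\eim{j} F \in \Derb_\Rc(\cor_M)$. The conclusion for $\roim{j} F$ then follows from the duality reduction of the first step. The principal obstacle is the microsupport estimate over $\partial U$: it is precisely here that the hypothesis of $\Lambda$ being subanalytic in the ambient $T^*M$, rather than merely in $T^*U$, becomes essential, as it is what tames the boundary behavior of $\SSi(F)$.
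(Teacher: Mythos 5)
Your overall skeleton (stratify $M$ using the ambient subanalyticity of $\Lambda$, conclude constructibility of $\eim{j}F$, deduce the case of $\roim{j}F$) is close to the paper's, and your duality reduction in the first step is legitimate. But the step you yourself call the heart of the argument --- the estimate $\SSi(\eim{j}F)\subset\bigcup_{a}T^*_{M_a}M$ over $\partial U$ --- is asserted, not proved, and the justification you sketch misses the actual difficulty. The covectors to be controlled are not ``covectors in $\Lambda$ whose base points converge to $\partial U$'' (those lie in $\bigcup_a T^*_{M_a}M$ by construction of the stratification); they are the new covectors of $\SSi(\eim{j}F)$ created over $\partial U$ by the extension by zero, which in general are \emph{not} limits of covectors in $\Lambda$ at all. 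For instance, with $F=\cor_U$ and $\Lambda$ the zero-section of $T^*U$, the microsupport of $\eim{j}\cor_U$ over a boundary point contains nonzero conormal directions to the boundary strata, while the closure of $\Lambda$ meets that fiber only in the zero covector. Controlling such covectors requires the $\mu$-condition of the stratification (limits of conormals), i.e.\ essentially the full force of results like \cite{KS90}*{Prop.~8.4.1} in the other direction, so an appeal to ``propagation and cut-off estimates'' without an argument leaves a genuine gap.

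Moreover, this step is unnecessary. Weak $\R$-constructibility of $\eim{j}F$ on $M$ only requires exhibiting a subanalytic stratification of $M$ on whose strata the cohomology is locally constant; no microsupport estimate for $\eim{j}F$ is needed. This is how the paper proceeds: since $\Lambda$ is isotropic and subanalytic in $T^*M$, \cite{KS90}*{Cor.~8.3.22} gives a $\mu$-stratification $M=\bigsqcup_a M_a$ with $\Lambda\subset\bigsqcup_a T^*_{M_a}M$; the induced partition $U=\bigsqcup_a U_a$ with $U_a=U\cap M_a$ is again a $\mu$-stratification of $U$, so \cite{KS90}*{Prop.~8.4.1} applied \emph{on $U$} shows that $F\vert_{U_a}$ is locally constant of finite rank, whence $\eim{j}F$ is locally constant of finite rank on each $U_a$ and vanishes on $M\setminus U$, giving $\eim{j}F\in\Derb_\Rc(\cor_M)$ directly. (If you insist on your route, the estimate $\SSi(\eim{j}F)\subset\bigcup_a T^*_{M_a}M$ then \emph{follows} from this constructibility via the equivalence in Prop.~8.4.1, rather than being an input to it.) Finally, the paper handles $\roim{j}F$ without duality: $\roim{j}F\simeq\rhom(\cor_U,\eim{j}F)$, which is $\R$-constructible by \cite{KS90}*{Prop.~8.4.10}; your reduction via $\RD_M\eim{j}\RD_U F$ works too, but only once the $\eim{j}$ case has been genuinely established.
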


\begin{proof}
(i) Let us treat first $\eim{j}F$. The set $\Lambda$ is a locally closed subanalytic subset of $T^*M$ and is isotropic. By~\cite{KS90}*{Cor.~8.3.22}, there exists a $\mu$-stratification $M=\bigsqcup_{a\in A}M_a$ such that 
$\Lambda\subset\bigsqcup_{a\in A}T^*_{M_a}M$.

Set $U_a=U\cap M_a$. Then $U=\bigsqcup_{a\in A}U_a$ is a  $\mu$-stratification  and one can 
apply loc.\ cit.\ Prop.~8.4.1. Hence, for each $a\in A$, $F\vert_{U_a}$ is locally constant of finite rank. 
Hence  $(\eim{j}F)\vert_{U_a}$ as well as $(\eim{j}F)_{M\setminus U}\simeq0$ is
locally constant of finite rank. 
Hence $\eim{j}F\in \Derb_\Rc(\cor_M)$. 

\spa
(ii) Set $G=\eim{j}F$. Then $G\in\Derb_\Rc(\cor_M)$ by (i) and so does $\roim{j}F\simeq \rhom(\cor_U,G)$ (apply~\cite{KS90}*{Prop.~8.4.10}).
\end{proof}
\begin{remark}
One has $\SSi(\RD_M F)=\SSi(F)^a$ where $(\scbul)^a$ is the antipodal map. Hence $\RD_M F$ satisfies the same hypotheses as $F$.
\end{remark}

\begin{corollary}\label{cor:finite1}
In the preceding situation, assume moreover that 
$U$ is relatively compact in $M$. Then $\rsect(U;F)$ and $\rsect_c(U;\RD_U F)$ belong to $\Derb_f(\cor)$ and are dual to each other.
\end{corollary}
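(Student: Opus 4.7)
The strategy is to push $F$ forward from $U$ to $M$ via $\roim{j}$, where Theorem~\ref{th:sa} places us in $\Derb_\Rc(\cor_M)$ and the relative compactness of $U$ furnishes compact support, after which Proposition~\ref{pro:dualrc} finishes the job.

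First, I would apply Theorem~\ref{th:sa} to get $\roim{j}F\in\Derb_\Rc(\cor_M)$. The support of $\roim{j}F$ is contained in $\overline U$ (if $x\notin\overline U$, a small neighborhood of $x$ meets $U$ trivially and the stalk vanishes), and $\overline U$ is compact by the relative compactness of $U$ in $M$. Hence Proposition~\ref{pro:dualrc} applies to $\roim{j}F$, yielding that both $\rsect(M;\roim{j}F)$ and $\rsect(M;\RD_M\roim{j}F)$ belong to $\Derb_f(\cor)$ and are dual to each other.

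Next, I would identify these two complexes with the ones in the statement. The adjunction gives $\rsect(M;\roim{j}F)\simeq\rsect(U;F)$ immediately. For the dual side, I would invoke the exchange formula $\RD_M\circ\roim{j}\simeq\eim{j}\circ\RD_U$, valid for the open embedding $j$ since $j^{-1}=j^{\,!}$. Combined with the standard identity $\rsect_c(M;\eim{j}G)\simeq\rsect_c(U;G)$ and the fact that $\rsect\simeq\rsect_c$ on sheaves whose support is compact, this yields
$\rsect(M;\RD_M\roim{j}F)\simeq\rsect_c(M;\eim{j}\RD_U F)\simeq\rsect_c(U;\RD_U F)$.

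The step requiring the most attention is the duality exchange $\RD_M\roim{j}F\simeq\eim{j}\RD_U F$; one can alternatively bypass it by observing that $\SSi(\RD_U F)=\SSi(F)^a$ satisfies the same Lagrangian/subanalytic hypothesis (as noted in the remark after Theorem~\ref{th:sa}), so $\eim{j}\RD_U F$ already lies in $\Derb_\Rc(\cor_M)$ with compact support, and then pair it directly with $\roim{j}F$ via Poincaré--Verdier duality. Beyond this bookkeeping, the corollary is a formal consequence of Theorem~\ref{th:sa} and Proposition~\ref{pro:dualrc}, and presents no substantive obstacle since all the analytic content is absorbed into Theorem~\ref{th:sa}.
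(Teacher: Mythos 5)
Your proof is correct and follows essentially the same route as the paper: push forward by $\roim{j}$, note $\R$-constructibility (Theorem~\ref{th:sa}) and compact support in $\overline U$, and conclude via Proposition~\ref{pro:dualrc} together with the identifications $\rsect(U;F)\simeq\rsect(M;\roim{j}F)$ and $\rsect_c(U;\RD_UF)\simeq\rsect(M;\RD_M\roim{j}F)$. One small caveat: the exchange $\RD_M\roim{j}F\simeq\eim{j}\RD_UF$ is not a consequence of $j^{-1}=j^{!}$ alone (the general formula goes the other way, $\RD_M\eim{j}\simeq\roim{j}\RD_U$); it needs biduality, hence constructibility of $F$ — which holds here, and your alternative argument via $\RD_UF$ and Theorem~\ref{th:sa} covers this point correctly.
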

\begin{proof}
One has $\rsect(U;F)\simeq \rsect(M;\roim{j}F)$ and  $\rsect_c(U;\RD_UF)\simeq \rsect(M;\RD_M\roim{j}F)$. Since $\roim{j}F$ 
is $\R$-constructible and has compact support, the result follows from Proposition~\ref{pro:dualrc}.
\end{proof}

For a complex analytic manifold $X$ (that we identify with the real underlying manifold if necessary), one denotes by  
$\Derb_\Cc(\cor_X)$  the full triangulated subcategory of $\Derb(\cor_X)$  consisting of $\C$-constructible sheaves. 

In this paper, a smooth  algebraic variety $X$ means  a quasi-compact smooth algebraic variety over $\C$ endowed with the Zariski topology. We denote by $\Xan$ the complex analytic manifold underlying $X$. 
If $X$ is  smooth  algebraic  variety, we keep the notation 
$\Derb_\Cc(\cor_X)$ to denote the category of algebraically constructible sheaves, that is, object of $\Derb_\Cc(\cor_\Xan)$ locally constant on an algebraic stratifications. Hence, for an algebraic variety $X$, one shall not confuse $\Derb_\Cc(\cor_X)$ and 
$\Derb_\Cc(\cor_\Xan)$, although $\Derb_\Cc(\cor_X)$ is a full subcategory of
$\Derb_\Cc(\cor_\Xan)$.

\begin{corollary}\label{cor:finite2}
 Let $X$ be a smooth  algebraic variety and let  $F\in\Derb_\Cc(\cor_X)$. Then 
 $\rsect(\Xan;F)$  and  $\rsect_c(\Xan;\RD_\Xan F)$ have finite dimensional cohomology over $\cor$  and are dual to each other.
\end{corollary}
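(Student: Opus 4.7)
The plan is to reduce Corollary~\ref{cor:finite2} to Corollary~\ref{cor:finite1} via an algebraic compactification. By Nagata's compactification theorem together with Hironaka's resolution of singularities in characteristic zero, I embed $X$ as a dense open subvariety of a smooth proper algebraic variety $\bar X$ over $\C$, and let $j\cl \Xan \into \bar X_\an$ denote the associated open immersion of complex analytic manifolds. Properness of $\bar X$ implies that $\bar X_\an$ is compact, while $(\bar X\setminus X)_\an$ is closed complex analytic, hence subanalytic, in $\bar X_\an$; so $\Xan$ is an open, relatively compact, subanalytic subset of $\bar X_\an$.

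The heart of the argument is producing a bound on $\SSi(F)$ that is subanalytic not only in $T^*\Xan$ but in $T^*\bar X_\an$. Choose a finite algebraic stratification $X=\bigsqcup_\alpha X_\alpha$ of $X$ by smooth locally closed subvarieties such that every cohomology sheaf $H^j(F)$ is locally constant on each stratum. Because $X\subset \bar X$ is an open immersion, $T^*X$ identifies with an open algebraic subvariety of $T^*\bar X$, so the Zariski closure in $T^*\bar X$ of $\bigcup_\alpha \overline{T^*_{X_\alpha} X}$ is a closed algebraic subvariety $\Lambda_0 \subset T^*\bar X$, pure of dimension $d_X$ and Lagrangian---the Lagrangian property being preserved by Zariski closure of an isotropic subvariety of maximal dimension, since isotropy of the smooth locus is a closed condition. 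Its analytification $\Lambda:=(\Lambda_0)_\an$ is then a closed, $\R^+$-conic, subanalytic Lagrangian subset of $T^*\bar X_\an$ (closed complex analytic subsets of complex manifolds being subanalytic), and the standard description of microsupports of $\C$-constructible sheaves yields $\SSi(F)\subset \Lambda\cap T^*\Xan$.

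Applying Corollary~\ref{cor:finite1} with $U=\Xan$, $M=\bar X_\an$, the given $F$, and the Lagrangian $\Lambda\cap T^*\Xan$ then immediately gives that both $\rsect(\Xan;F)$ and $\rsect_c(\Xan;\RD_\Xan F)$ lie in $\Derb_f(\cor)$ and are dual to each other. The principal difficulty is the construction of $\Lambda$: the analytic microsupport of an arbitrary $\C$-constructible sheaf on $\Xan$ need not admit a subanalytic Lagrangian extension across $\bar X_\an\setminus \Xan$, so the algebraicity hypothesis is essential here. It allows one to replace analytic closure by Zariski closure of an algebraic characteristic variety, after which the resulting algebraic Lagrangian in $T^*\bar X$ is automatically subanalytic upon analytification, making the criterion of Theorem~\ref{th:sa} applicable.
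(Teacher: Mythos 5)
Your proposal is correct and follows essentially the same route as the paper: compactify $X$ to a smooth proper variety, observe that the algebraic Lagrangian bound on $\SSi(F)$ (the union of conormals to an algebraic stratification) has Zariski closure in $T^*\bar X$ that is algebraic, hence subanalytic in $T^*\bar X_\an$, and then apply Corollary~\ref{cor:finite1} with $M=\bar X_\an$, $U=\Xan$. You merely make explicit what the paper leaves implicit (existence of the smooth compactification via Nagata--Hironaka, and the construction and Lagrangian property of the closure of the conormal variety).
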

\begin{proof}
Let $Z$ be a smooth algebraic compactification of $X$ with $X$ open in $Z$. 
By the hypothesis, $\Lambda$ is a closed algebraic subvariety of $T^*X$. Hence, 
 its  closure in $T^*Z$ is a closed  algebraic subvariety of $T^*Z$. Therefore $\Lambda$ is subanalytic in $T^*\Zan$.
 
Then the result follows from  Corollary~\ref{cor:finite1} with $M=\Zan$ and $U=\Xan$.
\end{proof}

\section{Reminders on DQ-modules, after~\cite{KS12}}

\subsection{Cohomologically complete modules}

In this subsection,
\eq
&&\parbox{75ex}{$X$ denotes  a  topological space and 
$\shr$ is a sheaf of $\Z[\hbar]$-algebras on $X$ with no $\hbar$-torsion.
}
\eneq

Let $\shm$ be an $\shr$-module. (Hence, a $\Z_X[\hbar]$-module.) One sets
\eqn
&&\Rh\eqdot \Z_X[\hbar,\hbar^{-1}]\tens[{\Z_X[\hbar]}]\shr,\\
&&\shm^\loc\seteq  \Rh\tens[\shr]\shm\simeq   \Z_X[\hbar,\hbar^{-1}]\tens[{\Z_X[\hbar]}]\shm,\\
&&\gr(\shr)\eqdot  \shr/\hbar\shr,\\
&&\gr(\shm)\seteq \gr(\shr)\ltens[\shr]\shm\simeq\Z_X\ltens[{\Z_X[\hbar]}]\shm.
\eneqn

\begin{definition}[{\cite{KS12}*{Def.~1.5.5}}]\label{def:cohco}
One says that  an object $\shm$ of~$\Der(\shr)$
is cohomologically complete
if it belongs to~$\Der(\Rh)^{\perp r}$, that is, 
$\Hom[\Der(\shr)](\shn,\shm)\simeq0$ for any $\shn\in\Der(\Rh)$.
\end{definition}

\begin{proposition}[{\cite{KS12}*{Prop.~1.5.6}}]\label{pro:eqvcoc}
Let $\shm\in\Der(\shr)$. 
Then the conditions below are equivalent.
\banum
\item
 $\shm$ is  cohomologically complete,
\item
 $\rhom[\shr](\Rh,\shm)\simeq0$,
\item
$\indlim[U\ni x]\Ext[{\Z[\hbar]}]{j}\bl{\Z[\hbar,\opb{\hbar}]},H^i(U;\shm)\br\simeq0$
for any $x\in X$, $j=0,1$ and any $i\in\Z$. Here, $U$ ranges over an open
neighborhood system of~$x$.
\eanum
\end{proposition}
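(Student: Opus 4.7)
The plan is to use (b) as a pivot: the equivalence (b)$\Leftrightarrow$(a) will be formal from the restriction-coinduction adjunction along $\shr\to\Rh$, while (b)$\Leftrightarrow$(c) will follow from the fact that $\Z[\hbar,\opb\hbar]$ has $\Z[\hbar]$-projective dimension one, which makes the relevant derived Hom explicitly computable by a two-term complex.

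For (b)$\Rightarrow$(a), I would apply the restriction-coinduction adjunction along $\shr\to\Rh$ to any $\shn\in\Der(\Rh)$, obtaining
$$\rhom[\shr](\shn,\shm)\simeq\rhom[\Rh]\bl\shn,\rhom[\shr](\Rh,\shm)\br,$$
which vanishes under (b) and hence gives $\Hom[\Der(\shr)](\shn,\shm)=0$. For the converse (a)$\Rightarrow$(b), I would observe that vanishing of the sheaf $\rhom[\shr](\Rh,\shm)$ is a local condition, so it suffices to prove $H^n\rsect\bl U;\rhom[\shr](\Rh,\shm)\br=0$ for every open $U\subset X$ and every $n\in\Z$; for this I would apply (a) to the $\Rh$-modules $\shn=j_!(\Rh\vert_U)[-n]$ (with $j\cl U\into X$ the open embedding), using the adjunction
$$\Hom[\Der(\shr)]\bl j_!(\Rh\vert_U)[-n],\shm\br\simeq H^n\rsect\bl U;\rhom[\shr](\Rh,\shm)\br.$$

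For (b)$\Leftrightarrow$(c), I would first use the factorization $\Rh\simeq\Z[\hbar,\opb\hbar]\tens[{\Z[\hbar]}]\shr$ together with tensor-hom adjunction to identify
$$\rhom[\shr](\Rh,\shm)\simeq\rhom[{\Z[\hbar]}]\bl\Z[\hbar,\opb\hbar],\shm\br.$$
Next, using the two-term free $\Z[\hbar]$-resolution of $\Z[\hbar,\opb\hbar]$ given by the $\hbar$-telescope $\bigoplus_{n\ge0}\Z[\hbar]\to\bigoplus_{n\ge0}\Z[\hbar]$, the right-hand side is computed by an explicit two-term complex and in particular commutes with $\rsect(U;\scbul)$. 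The resulting universal-coefficients sequence reads
$$0\to\Ext[{\Z[\hbar]}]{1}\bl\Z[\hbar,\opb\hbar],H^{i-1}(U;\shm)\br\to H^i\rsect\bl U;\rhom[\shr](\Rh,\shm)\br\to\Ext[{\Z[\hbar]}]{0}\bl\Z[\hbar,\opb\hbar],H^i(U;\shm)\br\to 0$$
for every $i\in\Z$. Finally, taking the exact filtered colimit $\indlim[U\ni x]$ identifies the stalks of the cohomology sheaves of $\rhom[\shr](\Rh,\shm)$ as extensions of the $\Ext^0$-colimit in degree $i$ by the $\Ext^1$-colimit in degree $i-1$, and simultaneous vanishing of all such stalks for all $i$ and $x$ is exactly condition (c).

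The main obstacle will be the careful bookkeeping of how three operations interact: the derived functor $\rhom[{\Z[\hbar]}]\bl\Z[\hbar,\opb\hbar],\scbul\br$, the section functor $\rsect(U;\scbul)$, and the stalk functor $\indlim[U\ni x]$. All three commute in our setting precisely because $\Z[\hbar,\opb\hbar]$ is of $\Z[\hbar]$-projective dimension one: this both lets $\rhom$ be represented by the explicit two-term complex and collapses the universal-coefficients spectral sequence to exactly the short exact sequence above that matches condition (c).
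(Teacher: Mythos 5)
The paper itself gives no proof of this proposition: it is recalled verbatim from \cite{KS12}*{Prop.~1.5.6}, so there is no in-paper argument to compare with. Your proposal is correct, and it essentially reconstructs the standard proof: pivot on (b), identify $\rhom[\shr](\Rh,\shm)\simeq\rhom[{\Z_X[\hbar]}]\bl\Z_X[\hbar,\opb{\hbar}],\shm\br$, and exploit that $\Z[\hbar,\opb{\hbar}]$ has $\Z[\hbar]$-projective dimension one. Your two directions of (a)$\Leftrightarrow$(b) are sound: the coinduction adjunction gives (b)$\Rightarrow$(a), and for (a)$\Rightarrow$(b) the test objects $\eim{j}(\Rh\vert_U)[-n]$ do lie in the image of $\Der(\Rh)\to\Der(\shr)$ (extension by zero commutes with restriction of scalars), while vanishing of $H^n\rsect\bl U;\rhom[\shr](\Rh,\shm)\br$ for all $U$ and $n$ does kill the complex, since its cohomology sheaves are the sheafifications of these presheaves. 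The only loosely worded point is in (b)$\Leftrightarrow$(c): the commutation with $\rsect(U;\scbul)$ is not a consequence of finite projective dimension as such. Projective dimension one is what collapses the hyperext spectral sequence to your two-term exact sequence; the commutation itself rests on the telescope being a bounded complex of direct sums of copies of the constant ring sheaf, so that the resulting sheaf-Hom complex is a two-term totalization of products of a K-injective resolution of $\shm$, and $\rsect(U;\scbul)$ commutes with products. More cleanly, the adjunction between the exact constant-sheaf functor and $\rsect(U;\scbul)$ yields $\rsect\bl U;\rhom[{\Z_X[\hbar]}](\Z_X[\hbar,\opb{\hbar}],\shm)\br\simeq\RHom[{\Z[\hbar]}]\bl\Z[\hbar,\opb{\hbar}],\rsect(U;\shm)\br$ directly. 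With that made precise, the passage to the exact filtered colimit $\indlim[U\ni x]$, identifying stalks of the cohomology sheaves with the terms in condition (c), is exactly right.
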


Denote by $\Der_\coc(\shr)$ the full subcategory of  $\Der(\shr)$ consisting of  cohomologically complete modules. Then clearly 
$\Der_\coc(\shr)$ is triangulated. 

\begin{proposition}[{\cite{KS12}*{Prop.~1.5.10, Cor.~1.5.9}}]\label{prop:homcc}
Let  $\shm\in\Der_\coc(\shr)$. Then
\banum
\item
$\rhom[\shr](\shn,\shm)\in\Der(\Z_X[\hbar])$ is \cc\
for any $\shn\in\Der(\shr)$.
\item
If $\gr(\shm)\simeq 0$, then $\shm\simeq0$.
\eanum
\end{proposition}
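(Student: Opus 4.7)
The plan is to reduce both (a) and (b) to the characterisation of Proposition~\ref{pro:eqvcoc}(b): $\shm$ is cohomologically complete iff $\rhom[\shr](\Rh, \shm)\simeq0$.

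For (a), the first step is to notice that, since $\hbar$ is central in $\shr$, we have
$\Rh \simeq \shr \tens[{\Z_X[\hbar]}]\Z_X[\hbar,\hbar^{-1}]$
as $\shr$-bimodules, and hence, by the extension--restriction of scalars adjunction,
\[
\rhom[\shr](\Rh, \shm) \simeq \rhom[{\Z_X[\hbar]}](\Z_X[\hbar,\hbar^{-1}], \shm).
\]
Thus cohomological completeness of $\shm$ as an $\shr$-module coincides with the analogous property with $\shr$ replaced by $\Z_X[\hbar]$. Applying the same reduction with $\rhom[\shr](\shn, \shm)$ in place of $\shm$, statement (a) reduces to proving that
$\rhom[{\Z_X[\hbar]}](\Z_X[\hbar,\hbar^{-1}], \rhom[\shr](\shn, \shm))\simeq 0$.
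The key move is then to commute the two $\rhom$'s: since the action of $\Z_X[\hbar,\hbar^{-1}]$ on $\shm$ commutes with that of $\shr$,
\[
\rhom[{\Z_X[\hbar]}](\Z_X[\hbar,\hbar^{-1}], \rhom[\shr](\shn, \shm)) \simeq \rhom[\shr](\shn, \rhom[{\Z_X[\hbar]}](\Z_X[\hbar,\hbar^{-1}], \shm)),
\]
and the inner term vanishes by cohomological completeness of $\shm$.

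For (b), I would unwind the definition of $\gr$ via the resolution $0 \to \Z_X[\hbar] \xrightarrow{\,\hbar\,} \Z_X[\hbar] \to \Z_X \to 0$, which identifies $\gr(\shm)$ with the cone of multiplication by $\hbar$ on $\shm$. Hence $\gr(\shm)\simeq 0$ is equivalent to $\hbar$ acting invertibly on $\shm$, in which case the $\shr$-action extends uniquely to an $\Rh$-action, placing $\shm$ in $\Der(\Rh)$. Cohomological completeness then gives $\Hom[\Der(\shr)](\shm, \shm)\simeq 0$, forcing the identity morphism of $\shm$ to vanish, and so $\shm\simeq 0$.

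The most delicate point I expect is justifying the commutation of the two $\rhom$'s at the derived level in part (a). It rests on the centrality of $\hbar$, which supplies the compatible bimodule structures on $\Rh$ and on the relevant objects; with those in place the rest of the argument is formal adjunction bookkeeping.
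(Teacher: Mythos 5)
Your proposal is correct and follows essentially the same route as the proof of the cited results \cite{KS12}*{Prop.~1.5.10, Cor.~1.5.9} (the present paper only quotes them without reproving them): for (a) the adjunction $\rhom[\shr](\Rh,\shm)\simeq\rhom[{\Z_X[\hbar]}](\Z_X[\hbar,\opb{\hbar}],\shm)$ together with the commutation of the two $\rhom$'s, which is legitimate since $\shn$ is an $(\shr,\Z_X[\hbar])$-bimodule and both sides identify with $\rhom[\shr](\shn\ltens[{\Z_X[\hbar]}]\Z_X[\hbar,\opb{\hbar}],\shm)$; for (b) the localization trick. The only point worth making explicit in (b) is that ``$\hbar$ acts invertibly'' puts $\shm$ in the essential image of $\Der(\Rh)\to\Der(\shr)$ via the natural isomorphism $\shm\isoto\shm^\loc\simeq\Rh\tens[\shr]\shm$ (using flatness of $\Rh$ over $\shr$), after which $\Hom[\Der(\shr)](\shm,\shm)\simeq0$ forces $\id_\shm=0$ exactly as you say.
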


\begin{proposition}[{\cite{KS12}*{Prop.~1.5.12}}]\label{pro:cohcodirim}
Let~$f\cl X\to Y$ be a continuous map and let $\shm\in\Der(\Z_X[\hbar])$.
If $\shm$ is \cc, then so is $\roim{f}\shm$.
\end{proposition}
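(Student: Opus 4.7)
My plan is to verify criterion (b) of Proposition~\ref{pro:eqvcoc} for $\roim{f}\shm$, that is, to show
\eqn
&&\rhom[{\Z_Y[\hbar]}](\Z_Y[\hbar,\opb{\hbar}],\roim{f}\shm)\simeq 0.
\eneqn
The basic idea is to push the $\rhom$ inside $\roim{f}$ via the standard adjunction $(\opb{f},\roim{f})$ and then invoke the cohomological completeness of $\shm$.

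First I would observe the trivial but crucial compatibility: since $\Z[\hbar,\opb{\hbar}]$ is a (constant) sheaf of rings and $\opb{f}$ commutes with the formation of constant sheaves, one has the canonical isomorphism
\eqn
&&\opb{f}\,\Z_Y[\hbar,\opb{\hbar}]\simeq\Z_X[\hbar,\opb{\hbar}].
\eneqn
This is what allows the adjunction to be applied with the module $\Z_Y[\hbar,\opb{\hbar}]$ playing the role of ``test object''.

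Next I would apply the derived adjunction formula
\eqn
&&\rhom[{\Z_Y[\hbar]}](\shn,\roim{f}\shm)\simeq\roim{f}\,\rhom[{\Z_X[\hbar]}](\opb{f}\shn,\shm),
\eneqn
valid for any $\shn\in\Der(\Z_Y[\hbar])$ since $\Z_X[\hbar]\simeq\opb{f}\Z_Y[\hbar]$. Taking $\shn=\Z_Y[\hbar,\opb{\hbar}]$ and using the identification above yields
\eqn
&&\rhom[{\Z_Y[\hbar]}](\Z_Y[\hbar,\opb{\hbar}],\roim{f}\shm)\simeq\roim{f}\,\rhom[{\Z_X[\hbar]}](\Z_X[\hbar,\opb{\hbar}],\shm).
\eneqn
By hypothesis and Proposition~\ref{pro:eqvcoc}(b), the inner term vanishes, so the right-hand side is $\roim{f}(0)\simeq 0$, proving the claim.

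The only step requiring any care is the first one, namely setting up the adjunction with the correct sheaves of rings; everything else is formal. Since $\Z[\hbar]$ is a constant sheaf of commutative rings the change-of-base poses no difficulty, so I do not expect a genuine obstacle here. One could alternatively verify criterion (c) of Proposition~\ref{pro:eqvcoc} directly using $\roim{f}$, but the adjunction argument above seems considerably cleaner.
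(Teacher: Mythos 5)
Your argument is correct and is essentially the canonical one: the paper itself gives no proof (it simply cites \cite{KS12}*{Prop.~1.5.12}), and the proof there proceeds exactly as you do, namely by the adjunction $\rhom[{\Z_Y[\hbar]}](\opb{\hbar}\text{-localized ring},\roim{f}\shm)\simeq\roim{f}\rhom[{\Z_X[\hbar]}](\Z_X[\hbar,\opb{\hbar}],\shm)\simeq0$ together with criterion (b) of Proposition~\ref{pro:eqvcoc}. No gaps: the identification $\opb{f}\Z_Y[\hbar,\opb{\hbar}]\simeq\Z_X[\hbar,\opb{\hbar}]$ and the projection of the adjunction through $\roim{f}$ are exactly the points that make the statement formal.
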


\begin{proposition}\label{pro:158}
Let $\shm\in\Der(\shr)$ be a cohomologically complete object and $a\in\Z$.
If $H^i(\gr(\shm))=0$ for any $i\geq a$, then $H^i(\shm)=0$ for any
$i>a$.
\end{proposition}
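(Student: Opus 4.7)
The plan is to show that $\tau^{>a}\shm$ lies in the essential image of $\Der(\Rh)\hookrightarrow\Der(\shr)$, and then exploit cohomological completeness of $\shm$ to force $\tau^{>a}\shm\simeq 0$. From the distinguished triangle $\shm\xrightarrow{\hbar}\shm\to\gr(\shm)\xrightarrow{+1}$ one extracts the short exact sequences
\[
0\to H^i(\shm)/\hbar H^i(\shm)\to H^i(\gr\shm)\to H^{i+1}(\shm)[\hbar]\to 0.
\]
The hypothesis $H^i(\gr\shm)=0$ for $i\geq a$ then forces $\hbar$ to act surjectively on $H^i(\shm)$ for $i\geq a$ and injectively for $i>a$, hence bijectively for every $i>a$. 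Consequently each $H^i(\tau^{>a}\shm)$ is an $\Rh$-module, so $\tau^{>a}\shm$ lies in $\Der(\Rh)\subset\Der(\shr)$.

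I would then apply $\rhom[\shr](\Rh,-)$ to the canonical truncation triangle $\tau^{\leq a}\shm\to\shm\to\tau^{>a}\shm\xrightarrow{+1}$. By Proposition~\ref{pro:eqvcoc}(b) the middle term vanishes, while for any $\shn\in\Der(\Rh)$ the flatness of $\shr\to\Rh$, the identity $\Rh\tens[\shr]\Rh\simeq\Rh$, and the adjunction between $(-)\tens[\shr]\Rh$ and the forgetful functor together yield $\rhom[\shr](\Rh,\shn)\simeq\shn$. Applied to $\shn=\tau^{>a}\shm$, the rotated triangle gives an isomorphism
\[
\tau^{>a}\shm\simeq\rhom[\shr](\Rh,\tau^{\leq a}\shm)[1].
\]

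Finally, the presentation $\Rh=\varinjlim_n\hbar^{-n}\shr$ admits the two-term telescope resolution by free $\shr$-modules, so $\Rh$ has projective dimension at most $1$ over $\shr$ and the functor $\rhom[\shr](\Rh,-)$ raises cohomological degree by at most one. Since $H^i(\tau^{\leq a}\shm)=0$ for $i>a$, this gives $H^n(\rhom[\shr](\Rh,\tau^{\leq a}\shm))=0$ for $n>a+1$; combined with the isomorphism above, $H^n(\shm)=H^n(\tau^{>a}\shm)=0$ for every $n>a$. The main technical point to verify carefully is the identification $\rhom[\shr](\Rh,\shn)\simeq\shn$ for $\shn\in\Der(\Rh)$ in the setting of sheaves of not necessarily commutative $\Z[\hbar]$-algebras, which reduces to flatness of $\Rh$ over $\shr$ and to the solvability of the telescope equation $s_n-\hbar s_{n+1}=t_n$ on any $\Rh$-module.
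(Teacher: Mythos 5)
Your reduction is carried out correctly up to the isomorphism $\tau^{>a}\shm\simeq\rhom[\shr](\Rh,\tau^{\leq a}\shm)[1]$: the long exact sequence argument, the fact that $\tau^{>a}\shm$ lies in the essential image of $\Der(\Rh)$ (this needs the small remark that the localization morphism $\tau^{>a}\shm\to(\tau^{>a}\shm)^\loc$ is an isomorphism, localization being exact and the cohomologies being already localized), the vanishing of the middle term via Proposition~\ref{pro:eqvcoc}~(b), and the identification $\rhom[\shr](\Rh,\shn)\simeq\shn$ for $\shn\in\Der(\Rh)$ are all fine. Note that this is already a different route from the paper, which simply appeals to the proof of \cite{KS12}*{Prop.~1.5.8}.

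The gap is the last step. The telescope does give an exact sequence $0\to\soplus_{n\in\N}\shr\to\soplus_{n\in\N}\shr\to\Rh\to0$, but in the category of sheaves of $\shr$-modules an infinite direct sum of copies of $\shr$ is not acyclic for $\hom[\shr](\scbul,\shn)$: one has $\hom[\shr](\soplus_{n}\shr,\shn)\simeq\prod_{n}\shn$, and countable products of sheaves are not exact (stalks do not commute with products). Concretely, your ``solvability of the telescope equation $s_n-\hbar s_{n+1}=t_n$ on any $\Rh$-module'' must be achieved for all $n$ simultaneously on a single neighborhood of a given point, while the neighborhood on which each individual equation is solvable may shrink with $n$. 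Hence the telescope only yields a distinguished triangle relating $\rhom[\shr](\Rh,\shn)$ to two derived countable products of copies of $\shn$, and such derived products need not have vanishing cohomology in degrees $>a$ even when $\shn$ does; so no bound ``$\rhom[\shr](\Rh,\scbul)$ raises degrees by at most one'' follows, and ``projective dimension $\leq 1$'' is not available at the sheaf level. Since this amplitude bound is precisely where the degree shift in the statement comes from, the proof is incomplete at its crucial point. This difficulty is why cohomological completeness is controlled in \cite{KS12} through condition~(c) of Proposition~\ref{pro:eqvcoc}, i.e.\ by inductive limits over shrinking open sets of module-level $\Ext[{\Z[\hbar]}]{j}$ of $H^i(U;\shm)$, where the projective-dimension-one argument is legitimate; to repair your argument you would have to establish the required bound by such stalkwise means, or follow the proof of \cite{KS12}*{Prop.~1.5.8} as the paper does.
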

\Proof
The proof is exactly the same as that of~\cite{KS12}*{Prop.~1.5.8}
when replacing $i>a$ with $i<a$.
\QED

\subsection{Microsupport and constructible sheaves}

Let~$M$ be a {\em real analytic} manifold and let  $\cor$ be a Noetherian
commutative ring of finite global homological dimension.

We shall need the next result which does not appear in~\cite{KS12}.
\begin{proposition}\label{pro:ssloc}
Let~$F\in\Derb(\Z_M[\hbar])$.
Then $ \SSi(F^\loc)\subset \SSi(F)$.
\end{proposition}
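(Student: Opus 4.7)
The plan is to present $F^\loc$ as a mapping telescope built from copies of $F$, and then apply sub-additivity of the microsupport along distinguished triangles and filtered colimits. Since $\SSi$ depends only on the underlying complex of sheaves of abelian groups, I may view $F$ and $F^\loc$ as objects of $\Derb(\Z_M)$.

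First, I would record the elementary presentation of the localization as a filtered colimit of $\Z[\hbar]$-modules,
\[
\Z[\hbar,\hbar^{-1}] \simeq \varinjlim\nolimits_n \bigl(\Z[\hbar] \xrightarrow{\hbar} \Z[\hbar] \xrightarrow{\hbar} \cdots \bigr),
\]
equivalently the mapping-telescope short exact sequence
\[
0 \to \bigoplus_{n\geq 0}\Z[\hbar] \xrightarrow{\,1-\sigma_\hbar\,} \bigoplus_{n\geq 0}\Z[\hbar] \to \Z[\hbar,\hbar^{-1}] \to 0,
\]
where $\sigma_\hbar$ shifts the $n$-th summand into the $(n{+}1)$-th via multiplication by $\hbar$; injectivity of $1-\sigma_\hbar$ is immediate. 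Tensoring this sequence over $\Z[\hbar]$ with $F$ (which preserves exactness because the left two terms are $\Z[\hbar]$-free) yields a short exact sequence of sheaves
\[
0 \to \bigoplus_{n\geq 0} F \to \bigoplus_{n\geq 0} F \to F^\loc \to 0,
\]
and hence a distinguished triangle in $\Derb(\Z_M)$.

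Applying to this triangle the standard sub-additivity $\SSi(\mathrm{cone}) \subset \SSi(\mathrm{source}) \cup \SSi(\mathrm{target})$ from \cite{KS90}*{(5.1.3)}, I would obtain
\[
\SSi(F^\loc) \subset \SSi\Bigl(\bigoplus_{n\geq 0} F\Bigr).
\]
It then remains to show $\SSi(\bigoplus_{n\geq 0} F) \subset \SSi(F)$. I would argue this by writing $\bigoplus_{n\geq 0} F = \varinjlim_N \bigoplus_{n\leq N} F$, observing that each finite partial sum has microsupport exactly $\SSi(F)$ (finite direct summands being retracts), and invoking the stability of microsupport under filtered colimits of sheaves of abelian groups.

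The step I expect to be most delicate is this last one — stability of $\SSi$ under infinite direct sums (equivalently, under the relevant filtered colimit). The subtlety is that $R\Gamma$ does not in general commute with infinite direct sums on non-compact opens, so the defining vanishing condition for the complement of $\SSi$ is not trivially preserved. One has to argue at the stalk level using a cofinal basis of sufficiently small neighborhoods of the test point $x_1$, or else appeal to an equivalent characterization of $\SSi$ (for instance via propagation across hypersurfaces, or via the Sato micro-support test on distinguished triangles) which is visibly compatible with filtered colimits of sheaves.
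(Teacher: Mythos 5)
Your telescope triangle and the triangle inequality for the microsupport are correct, and they do reduce the statement to the inclusion $\SSi\bigl(\bigoplus_{n\geq 0}F\bigr)\subset\SSi(F)$. But that inclusion is where the entire content of the proposition sits, and you leave it open, offering only a gesture towards a stalk-level argument or an unspecified characterization of $\SSi$ compatible with filtered colimits. Your first suggestion does not meet the real difficulty: shrinking the neighborhoods of the test point $x_1$ does not help, because in the defining vanishing condition $(\mathrm{R}\Gamma_{\{\varphi\geq\varphi(x_1)\}}F)_{x_1}=0$ the sections are taken over sets that are not relatively compact, and it is precisely the commutation of sections with infinite direct sums (or filtrant colimits) over such sets that can fail. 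The compactness has to be injected explicitly, via the equivalent description of the microsupport in \cite{KS90}*{Prop.~5.1.1} in which the test conditions are expressed through $\rsect(K;\scbul)$ for $K$ compact; since $\rsect(K;\scbul)$ commutes with filtrant inductive limits (\cite{KS90}*{Prop.~2.6.6}), the stability you need follows.

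Once this ingredient is in hand, your detour through the telescope and the direct sum becomes unnecessary: this is exactly how the paper argues, directly on $F^\loc$. Since $\Z[\hbar,\opb{\hbar}]$ is flat over $\Z[\hbar]$, one has $\rsect(K;F)^\loc\simeq\rsect(K;F^\loc)$ for every compact $K$ (equivalently, view $F^\loc$ as the filtrant colimit of $F\xrightarrow{\hbar}F\xrightarrow{\hbar}\cdots$), so every covector outside $\SSi(F)$ satisfies the compact-sections test for $F^\loc$ as well, giving $\SSi(F^\loc)\subset\SSi(F)$ in one step. So your route can be completed, but the lemma you defer is not a routine citation to be waved at: its proof is the paper's proof, and your proposal as written has a genuine gap exactly there.
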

\begin{proof}
By using one of the equivalent definitions of the micro-support given in~\cite{KS90}*{Prop.~5.11}, it is enough to check that for $K$ compact, 
$\rsect(K;F)^\loc\simeq\rsect(K;F^\loc)$ which follows from loc.\ cit.\ Prop.~2.6.6 and the fact that 
$\Z[\hbar,\opb{\hbar}]$ is flat over $\Z[\hbar]$. 
\end{proof}

\begin{proposition}[{\cite{KS12}*{Prop.~7.1.6}}]\label{pro:sscohco}
Let~$F\in\Derb(\Z_M[\hbar])$ and assume that $F$ is cohomologically
complete. Then
\eq\label{eq:SSgr}
&&\SSi(F)=\SSi(\gr(F)).
\eneq
\end{proposition}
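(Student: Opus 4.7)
The inclusion $\SSi(\gr F)\subset\SSi(F)$ is immediate from the distinguished triangle $F\xrightarrow{\hbar}F\to\gr F\xrightarrow{+1}$ (obtained by tensoring $F$ with the resolution $0\to\Z_M[\hbar]\xrightarrow{\hbar}\Z_M[\hbar]\to\Z_M\to 0$) combined with the triangle inequality for microsupport, which gives $\SSi(\gr F)\subset\SSi(F)\cup\SSi(F)=\SSi(F)$.

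For the reverse inclusion, my plan is to use the test-function characterization of microsupport from \cite{KS90}*{Def.~5.1.2}. Suppose $(x_0,\xi_0)\notin\SSi(\gr F)$ and choose an open neighborhood $U_0\subset T^*M$ of $(x_0,\xi_0)$ such that $(\rsect_{\{\phi\geq 0\}}\gr F)_{x_1}\simeq 0$ whenever $\phi(x_1)=0$ and $(x_1,d\phi(x_1))\in U_0$. Fix such a pair $(x_1,\phi)$ and set $K\eqdot\rhom(\Z_{\{\phi\geq 0\}},F)\simeq\rsect_{\{\phi\geq 0\}}F$. By Proposition~\ref{prop:homcc}(a) the object $K$ is cohomologically complete, and applying the triangulated functor $\rhom(\Z_{\{\phi\geq 0\}},\scbul)$ to the triangle above identifies $\gr K\simeq\rsect_{\{\phi\geq 0\}}\gr F$, so that $(\gr K)_{x_1}\simeq 0$. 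It therefore suffices to prove $K_{x_1}\simeq 0$, which by the same characterization will give $(x_1,d\phi(x_1))\notin\SSi(F)$ and hence $(x_0,\xi_0)\notin\SSi(F)$.

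The closing step $K_{x_1}\simeq 0$ is Nakayama-type but must be carried out at the sheaf level. Writing $K_{x_1}\simeq\indlim_{V\ni x_1}\rsect(V;K)$, each $\rsect(V;K)\in\Der(\Z[\hbar])$ is cohomologically complete by Proposition~\ref{pro:cohcodirim} applied to $V\to\mathrm{pt}$, and $\gr\rsect(V;K)\simeq\rsect(V;\gr K)$ by applying $\rsect(V;\scbul)$ to the triangle. Combining the sheaf-level cohomological completeness of $K$ -- rephrased via Proposition~\ref{pro:eqvcoc}(c) as the vanishing of $\indlim_V\Ext^j_{\Z[\hbar]}(\Z[\hbar,\hbar^{-1}],H^i(V;K))$ for $j=0,1$ and all $i$ -- with the fact that $\hbar$ acts invertibly on each $H^i(K)_{x_1}$ (deduced from $(\gr K)_{x_1}\simeq 0$ via the long exact sequence of the triangle) should force $H^i(K)_{x_1}=0$ for all $i$.

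The main obstacle is this last deduction. Cohomological completeness is a sheaf-level notion that is not stable under the filtered colimit defining a stalk, so the Nakayama-type Proposition~\ref{prop:homcc}(b) cannot be applied directly to the abstract stalk $K_{x_1}\in\Der(\Z[\hbar])$; in particular there is in general no open neighborhood $V$ of $x_1$ on which $\gr K$ itself vanishes, since $\rsect_{\{\phi\geq 0\}}\gr F$ is typically nonzero at nearby $x'$ with $\phi(x')>0$. The argument therefore has to route through the germ-level characterization of Proposition~\ref{pro:eqvcoc}(c) rather than through a direct application of Proposition~\ref{prop:homcc}(b) on the stalk.
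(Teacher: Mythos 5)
Your setup is correct up to the last step: the easy inclusion is the same as in the paper, and the identifications $K\eqdot\rhom(\Z_{\{\phi\geq0\}},F)\simeq\rsect_{\{\phi\geq0\}}F$, $\gr K\simeq\rsect_{\{\phi\geq0\}}\gr F$, together with the cohomological completeness of $K$ (Proposition~\ref{prop:homcc}~(a)), are all fine. But the proof is not complete, and the missing step is not a technicality: it is the whole content of the inclusion $\SSi(F)\subset\SSi(\gr F)$. As you yourself note, there is no neighborhood of $x_1$ on which $\gr K$ vanishes, so Proposition~\ref{prop:homcc}~(b) applies neither on a neighborhood nor to the stalk (stalks are filtered colimits and cohomological completeness is not inherited by them). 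The route you propose through Proposition~\ref{pro:eqvcoc}~(c) does not close the gap either: that condition gives the vanishing of $\indlim[V\ni x_1]\Ext[{\Z[\hbar]}]{j}\bl{\Z[\hbar,\opb{\hbar}]},H^i(V;K)\br$, i.e.\ it controls the colimit of the $\Ext$ groups, whereas the conclusion you need concerns the stalk $H^i(K)_{x_1}\simeq\indlim[V\ni x_1]H^i(V;K)$ itself. Since $\Z[\hbar,\opb{\hbar}]$ is not finitely presented over $\Z[\hbar]$, these $\Ext$'s do not commute with the filtered colimit, and ``$\hbar$ invertible on $H^i(K)_{x_1}$ plus vanishing of the colimit of $\Ext$'s'' does not formally yield $H^i(K)_{x_1}=0$. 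So the argument stops exactly at the point where the theorem has to be proved.

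The paper's proof is organized precisely to avoid ever taking a stalk. Instead of the test-function characterization, it uses the equivalent characterization of the microsupport (\cite{KS90}*{Prop.~5.1.1}) in terms of restriction morphisms between honest derived section complexes: it suffices to show that $\rsect(V;F)\to\rsect(U;F)$ is an isomorphism for suitable pairs of open subsets $U\subset V$ as soon as $\rsect(V;\gr F)\to\rsect(U;\gr F)$ is. Letting $G$ be the cone of $\rsect(V;F)\to\rsect(U;F)$, one has $\gr(G)\simeq0$; moreover $\rsect(U;F)$ and $\rsect(V;F)$ are cohomologically complete by Proposition~\ref{pro:cohcodirim} (direct image of $F\vert_U$, $F\vert_V$ to a point), hence so is $G$ because the cohomologically complete objects form a triangulated subcategory, and then $G\simeq0$ by Proposition~\ref{prop:homcc}~(b). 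Every object in this argument is a genuine $\rsect$, not a colimit of such, which is what makes the Nakayama step legitimate. If you want to salvage your version, you must first convert the pointwise non-characteristic hypothesis on $\gr F$ into section-level isomorphism (or vanishing) statements over actual open sets via \cite{KS90}*{Prop.~5.1.1} before invoking completeness --- which is exactly the paper's proof.
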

\begin{proof}
Let us recall the proof of loc.\ cit. The inclusion
\eqn
&&\SSi(\gr(F)) \subset \SSi(F)
\eneqn
follows from the distinguished triangle
$F\to[\hbar]F\to\gr(F)\to[+1]$.
Let us prove the converse inclusion.

Using the definition of the microsupport, it is enough to prove that given two
open subsets $U\subset V$ of~$M$,
$\rsect(V;F)\to\rsect(U;F)$ is an isomorphism
as soon as
$\rsect(V;\gr(F))\to \rsect(U;\gr(F))$ is an isomorphism.
Consider a distinguished triangle $\rsect(V;F)\to\rsect(U;F)\to G\to[+1]$.
Then we get a distinguished triangle
$\rsect(V;\gr(F))\to \rsect(U;\gr(F))\to \gr(G)\to[+1]$. Therefore,
$\gr(G)\simeq0$. On the other hand,
$G$ is cohomologically complete, thanks to
Proposition~\ref{pro:cohcodirim} (applied to  $F\vert_U$ and $F\vert_V$) and  then $G\simeq0$ by Proposition~\ref{prop:homcc}~(b).
\end{proof}

\begin{proposition}[{\cite{KS12}*{Prop.~7.1.7}}]\label{pro:RCcohco}
Let~$F\in \Derb_\Rc(\Chbar)$. Then $F$ is cohomologically complete.
\end{proposition}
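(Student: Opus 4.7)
The plan is to verify condition (c) of Proposition~\ref{pro:eqvcoc}: for every $x\in M$, every $i\in\Z$, and $j\in\{0,1\}$, one must show
\[
\indlim_{U\ni x}\Ext[{\Chbar}]{j}\bl\Chbarl,H^{i}(U;F)\br\simeq0.
\]
The main observation is that this is governed entirely by the stalks of $F$.

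First I would exploit $\R$-constructibility to trivialize the inductive system on the left. Using a subanalytic $\mu$-stratification adapted to $F$ together with the standard local structure theory of $\R$-constructible sheaves, one can produce a cofinal family $\{U_\alpha\}$ of open subanalytic neighborhoods of $x$ on which the restriction $\rsect(U_\alpha;F)\to F_x$ is an isomorphism in $\Derb(\Chbar)$. Consequently $H^{i}(U_\alpha;F)\simeq H^{i}(F)_x$, and this common value is a finitely generated $\Chbar$-module by condition~(ii) of $\R$-constructibility. The inductive system therefore stabilizes, and the problem reduces to the purely algebraic assertion that $\Ext[{\Chbar}]{j}(\Chbarl,N)\simeq 0$ for $j=0,1$ whenever $N$ is a finitely generated $\Chbar$-module.

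This algebraic assertion rests on the fact that $\Chbar=\C\forl$ is a complete discrete valuation ring with uniformizer $\hbar$ and fraction field $\Chbarl$. For $j=0$, any $\phi\cl\Chbarl\to N$ satisfies $\phi(1)=\hbar^{n}\phi(\hbar^{-n})\in\hbar^{n}N$ for every $n\geq 0$, so $\phi(1)\in\bigcap_{n}\hbar^{n}N=0$ by Krull's intersection theorem. For $j=1$, apply $\RHom[{\Chbar}](\scbul,N)$ to the short exact sequence
\[
0\to\Chbar\to\Chbarl\to\Chbarl/\Chbar\to 0,
\]
write $\Chbarl/\Chbar\simeq\indlim_{n}\Chbar/\hbar^{n}\Chbar$, and invoke the Milnor sequence for $\Ext$ of a filtered colimit. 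Since the $\hbar^{n}$-torsion of the Noetherian $\Chbar$-module $N$ eventually stabilizes, the $\mathrm{lim}^{1}$ term vanishes and $\Ext[{\Chbar}]{1}(\Chbarl/\Chbar,N)$ is identified with the $\hbar$-adic completion of $N$, which equals $N$ because $N$ is finitely generated over the complete local ring $\Chbar$. Together with $\Ext[{\Chbar}]{1}(\Chbar,N)=0$, the long exact sequence forces $\Ext[{\Chbar}]{1}(\Chbarl,N)\simeq 0$.

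The principal technical obstacle is the first step: producing a cofinal family of neighborhoods on which sections compute the stalk. This is part of the local structure theory of $\R$-constructible sheaves (via $\mu$-stratifications and a careful choice of distinguished neighborhoods of $x$) and is classical. Everything that follows is elementary commutative algebra over the complete discrete valuation ring $\Chbar$.
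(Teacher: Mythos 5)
Your argument is correct and follows essentially the paper's own route: both proofs verify criterion (c) of Proposition~\ref{pro:eqvcoc} by reducing to the stalk (the paper by commuting the inductive limit with $\Ext$, you by exhibiting a cofinal family of cohomologically trivial subanalytic neighborhoods, which is in fact the justification for that commutation) and then concluding from finite generation of $H^i(F)_x$ over the complete ring $\Chbar$, the only difference being that you prove the stalkwise vanishing $\Ext[{\Chbar}]{j}\bl\Chbarl,N\br\simeq0$ for $j=0,1$ by hand (Krull intersection and a Milnor sequence) where the paper simply quotes that $F_x$ is cohomologically complete. Two details worth making explicit: criterion (c) is formulated with $\Ext[{\Z[\hbar]}]{j}\bl\Z[\hbar,\hbar^{-1}],\scbul\br$, which agrees with your $\Ext[{\Chbar}]{j}\bl\Chbarl,\scbul\br$ on $\Chbar$-modules since both compute $\varprojlim$ and ${\varprojlim}^1$ of the multiplication-by-$\hbar$ tower, and in the $j=1$ step the long exact sequence closes only because the connecting map $N\simeq\Hom[\Chbar](\Chbar,N)\to\Ext[{\Chbar}]{1}\bl\Chbarl/\Chbar,N\br\simeq\widehat{N}$ is the canonical completion map, hence surjective (indeed bijective) for $N$ finitely generated.
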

\begin{proof}
Let us recall the proof of loc.\ cit.  One has
\eqn
\inddlim[U\ni x]\Ext[{\Z[\hbar]}]{j}\bl\Z[\hbar,\hbar^{-1}],H^i(U;F)\br&\simeq&
\Ext[{\Z[\hbar]}]{j}\bl\Z[\hbar,\hbar^{-1}],\inddlim[U\ni x]H^i(U;F)\br\\
&\simeq&\Ext[{\Z[\hbar]}]{j}\bl\Z[\hbar,\hbar^{-1}],F_x\br
\simeq0
\eneqn
where the last isomorphism follows from the fact that $F_x$ is cohomologically complete.

Hence, hypothesis~(c) of Proposition~\ref{pro:eqvcoc} is satisfied.
\end{proof}

\subsection{DQ-modules}

{}In this subsection,  $X$ will be  a complex   manifold (not necessarily symplectic) of complex dimension $d_X$. 

Set 
$\Oh[X]\eqdot\sho_X[[\hbar]]=\prolim[n]\sho_X\otimes_{\C}(\Chbar/\hbar^n\Chbar)$.
An associative multiplication law 
$\star$ on $\Oh[X]$ is a
star-product if it is $\Chbar$-bilinear and satisfies
\eq\label{eq:star1}
&&f\star g=\sum_{i\geq 0}P_i(f,g)\hbar^i\quad\text{for $f,g\in\OO[X]$,}
\eneq
where the $P_i$'s are bi-differential operators, $P_0(f,g)=fg$ 
and $P_i(f,1)=P_i(1,f)=0$ for $f\in\sho_X$ and $i>0$. 

 We call $(\OO[X][[\hbar]],\star)$ a {\em star-algebra}. 
A $\star$-product defines a Poisson structure on $(X,\sho_X)$ by the formula
\eq\label{eq:poisson}
&&\{f,g\}=P_1(f,g)-P_1(g,f)\equiv\opb{\hbar}(f\star g-g\star f)\mod\hbar\OO[X][[\hbar]].
\eneq

\begin{definition}\label{def:DFDalg}
A $\DQ$-algebroid  $\sha$ on $X$ is a $\Chbar$-algebroid locally
isomorphic to a star-algebra as a $\Chbar$-algebroid.
\end{definition}
\begin{remark}
The data of a \DQ-algebroid $\A$ on $X$ endows $X$ with a structure of a complex Poisson manifold and one says that $\A$ is a quantization of the Poisson manifold.  
Kontsevich's famous theorem~\cite{Ko01, Ko03} (see also~\cite{Ka96} for the case of contact manifolds) asserts that  any complex Poisson manifold may be quantized.
\end{remark}
\begin{example}\label{exa:1}
Assume that $M$ is an open subset of $\C^n$,   $X=T^*M$ and denote by  $(x;u)$
 the symplectic coordinates on $X$.  In this case there is a canonical $\star$-algebra $\A$ that is usually  denoted by 
$\HWo$, its localization with respect to $\hbar$ being denoted by $\HW$. 

Let~$f,g\in\OO[X]\forl$. Then the $\DQ$-algebra $\HWo$ is the star algebra $(\OO[X]\forl,\star)$ where:
\eq\label{eq:wstar}
f\star g
&=&\sum_{\alpha\in\N^n} \dfrac{\hbar^{\vert\alpha\vert}}{\alpha !}
(\partial^{\alpha}_uf)(\partial^{\alpha}_xg).
\eneq
This product is similar to the product of the total symbols of
differential operators on~$X$ and indeed, the
morphism of~$\C$-algebras $\pi_M^{-1}\shd_M\To \HW[X]$ is given by
\eqn
f(x)\mapsto f(x),&&\partial_{x_i}\mapsto \hbar^{-1}u_i,
\eneqn
where, as usual, $\shd_M$ denotes the ring of finite order holomorphic differential operators and $\pi_M\cl T^*M\to M$ is the projection.
\end{example}

\medskip
 For a $\DQ$-algebroid $\A$, there is locally an  isomorphism of  $\C$-algebroids
$\A/\hbar \A\isoto \sho_X$.  Moreover there exists a unique isomorphism of $\C$-algebras
\eq\label{eq:OendA}
&&\shend(\id_{\gr\A})\simeq\sho_X.
\eneq
Therefore, there is  a well-defined functor
\eq\label{eq:OtensA}
&&\scbul\ltens[\sho_X]\scbul\cl\Derb(\sho_X)\times\Derb(\gr\A)\to\Derb(\gr\A).
\eneq

\begin{theorem}[{\cite{KS12}*{Th.~1.2.5}}]\label{th:ks125}
For a $\DQ$-algebroid $\A$, both $\A$ and $\Ah$ are right and left Noetherian 
\lp in particular, coherent\rp. 
\end{theorem}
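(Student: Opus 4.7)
The plan is to reduce to the local model of a star-algebra and then combine Noetherianity of $\sho_X$ with the $\hbar$-adic completeness of $\A$. Since being Noetherian is a local property on $X$ and $\A$ is, by Definition~\ref{def:DFDalg}, locally isomorphic as a $\Chbar$-algebroid to a star-algebra $(\sho_X\forl,\star)$, I may assume throughout that $\A$ is of this form. In particular, $\A$ is $\hbar$-torsion free.

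For stalkwise Noetherianity, fix $x\in X$. The stalk $\A_x$ is $\hbar$-adically separated and complete, and $\A_x/\hbar\A_x\simeq \sho_{X,x}$ is a Noetherian local ring (germs of holomorphic functions). A standard filtered-ring argument then applies: given a left ideal $I\subset\A_x$, the $\hbar$-adic graded ideal $\gr(I)\subset\gr(\A_x)\simeq\sho_{X,x}[\hbar]$ is finitely generated by Hilbert's basis theorem; lifting a finite set of generators to $I$ and invoking $\hbar$-adic completeness of $\A_x$ together with Nakayama's lemma (with $\hbar$ in the Jacobson radical) shows that these lifts already generate $I$.

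To upgrade stalkwise Noetherianity to Noetherianity of $\A$ as a sheaf of rings, the strategy is to show that for every Stein compact $K\subset X$ the ring $\Gamma(K;\A)$ is Noetherian and faithfully flat over each stalk $\A_x$ with $x\in K$. This is the DQ-analogue of Frisch's classical flatness and Noetherianity theorem for $\sho_X$, and is obtained by combining that theorem with the stalkwise $\hbar$-adic completion argument; the star product, being a formal series of bi-differential operators deforming the ordinary product on $\sho_X$, is compatible with the necessary analytic manipulations, and the left/right symmetry is automatic by passing to the opposite algebroid.

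Finally, Noetherianity of $\Ah$ follows by localization at the central element $\hbar$: stalkwise $(\Ah)_x\simeq\A_x[\hbar^{-1}]$ is Noetherian as a localization of a Noetherian ring, and the sheaf statement follows from the (faithful) flatness of $\Chbarl$ over $\Chbar$ together with the corresponding statement for $\A$. The main obstacle is the Stein-compact step, namely the propagation of the Frisch-type theorem from $\sho_X$ through the formal deformation to $\A$; once that is in place, the stalkwise lifting argument and the passage from $\A$ to $\Ah$ are comparatively routine.
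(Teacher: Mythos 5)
This statement is not proved in the paper at all: it is quoted verbatim from \cite{KS12}*{Th.~1.2.5}, so the only meaningful comparison is with the proof given there. Your local reduction to a star-algebra and the stalkwise argument (Hilbert basis for $\gr$ of the $\hbar$-adic filtration, lifting generators, convergence by $\hbar$-adic completeness) are correct and are indeed the standard mechanism behind the result: in \cite{KS12} the theorem is deduced from a general statement that an $\hbar$-torsion-free, $\hbar$-adically complete algebra whose reduction mod $\hbar$ is Noetherian is itself Noetherian, applied to the local model $(\sho_X\forl,\star)$.

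The genuine gap is exactly the step you flag as ``the main obstacle'' and then do not carry out. In this context ``Noetherian'' for a sheaf of rings is much stronger than ``all stalks are Noetherian'': following \cite{Ka03} and \cite{KS12} it includes coherence of $\A$ as a sheaf of rings and the stability of coherence under filtrant unions of coherent submodules of a coherent module, and it is precisely these sheaf-theoretic properties (not the stalkwise ones) that make the whole theory of coherent and good $\Ah$-modules work. Your proposal reduces this to an unproved ``DQ-analogue of Frisch's theorem'' for $\Gamma(K;\A)$ on Stein compacts, asserted to follow because the star product is ``compatible with the necessary analytic manipulations''; that assertion is the actual content of the theorem, and as stated it is also slightly garbled: the relevant flatness is that of the stalk $\A_x$ over the section ring $\Gamma(K;\A)$ (as in Frisch's theorem, where $\sho_{X,x}$ is flat over $\sho_X(K)$), not faithful flatness of $\Gamma(K;\A)$ over $\A_x$, for which there is not even a natural map. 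Likewise the final step for $\Ah$ needs more than flatness of $\Chbarl$ over $\Chbar$ and stalkwise localization: to see that a locally finitely generated $\Ah$-submodule of a coherent $\Ah$-module is coherent one must produce $\A$-lattices and invoke the filtrant-union property of the Noetherian sheaf $\A$, which is again the sheaf-level statement you have not established. So the outline is in the right spirit, but the hard, sheaf-theoretic half of the proof is missing.
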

One defines the functors
\eqn
&&\gr\cl\Derb(\A)\to\Derb(\gr\A),\quad \shm\mapsto \C_X\ltens[\Chbar_X]\shm,\\
&&(\scbul)^\loc\cl \Derb(\A)\to\Derb(\Ah),\quad  \shm\mapsto  \Chbarl_X\tens[\Chbar_X]\shm,\\
&&\for\cl  \Derb(\bA)\to \Derb(\A)\mbox{ associated with }\sigma_0\cl \A[X]\to\bA.
\eneqn
The functor $(\scbul)^\loc$ is exact on  $\md[\A]$.
  The category $\Mod(\bA)$ is equivalent to the
full subcategory of~$\Mod(\A)$ consisting of objects $M$ such that $\hbar\cl M\to M$ vanishes.

\begin{theorem}[{\cite{KS12}*{Th.~1.6.1 and~1.6.4}}]\label{th:formalfini2}
Let~$\shm\in\Der^+(\A)$. Then the two conditions below are equivalent: 
\banum
\item[{\rm(a)}]
$\shm$ is cohomologically complete
and $\gr(\shm)\in\Der^+_\coh(\gr\A)$,
\item[{\rm(b)}]
$\shm\in\Der^+_\coh(\A)$. 
\eanum
\end{theorem}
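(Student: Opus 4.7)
My strategy is to prove the two implications separately, using that the question is local on $X$ and that locally $\A$ is isomorphic (as a $\Chbar$-algebroid) to a star-algebra $(\sho_X\forl,\star)$ whose sections are $\hbar$-adically complete.

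For $(b) \Rightarrow (a)$: Given $\shm \in \Der^+_\coh(\A)$, the distinguished triangle $\shm \xrightarrow{\hbar} \shm \to \gr(\shm) \to[+1]$ together with Theorem~\ref{th:ks125} shows that $\gr(\shm)$ is coherent over $\A$; since $\hbar$ annihilates every $H^i(\gr(\shm))$, coherence descends to the (also Noetherian) quotient $\gr\A$. For cohomological completeness, Proposition~\ref{pro:eqvcoc}(c) shows the condition is local, and locally $\A$ itself is $\hbar$-adically complete, which verifies criterion (c) for $\A$ directly. Since $\Der_\coc(\A)$ is a triangulated subcategory and a bounded-below coherent complex is locally quasi-isomorphic to one built from finite free $\A$-modules by iterated cones, we conclude that $\shm$ is cohomologically complete.

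For $(a) \Rightarrow (b)$: Work locally. Let $a$ be the smallest integer with $H^a(\gr(\shm)) \ne 0$; applying Proposition~\ref{pro:158} to a suitable shift yields $H^i(\shm) = 0$ for $i < a$. Choose a finite presentation $\gr\A^{\oplus n} \twoheadrightarrow H^a(\gr(\shm))$, and lift the generators to elements of $H^a(\shm)$. This lift exists because cohomological completeness, combined with the vanishing just established, forces the reduction map $H^a(\shm)/\hbar H^a(\shm) \to H^a(\gr(\shm))$ to be surjective. The resulting morphism $\A^{\oplus n}[-a] \to \shm$ has a cofiber $\shm'$ that is again cohomologically complete (a triangulated property) with $\gr(\shm')$ coherent and of strictly higher minimal nonvanishing degree. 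Iterating, one constructs a morphism to $\shm$ from a bounded-below complex of finite free $\A$-modules whose cone $\shc$ is cohomologically complete with $\gr(\shc) \simeq 0$; Proposition~\ref{prop:homcc}(b) then yields $\shc \simeq 0$, so $\shm$ is coherent.

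The main obstacle is the lifting step in $(a) \Rightarrow (b)$: one must extract from mere cohomological completeness the surjectivity of $H^a(\shm)/\hbar H^a(\shm) \to H^a(\gr(\shm))$. This is precisely what distinguishes a genuinely $\hbar$-complete object from a purely formal one, and all the completeness machinery --- the $\Ext$-vanishing of Proposition~\ref{pro:eqvcoc}(c), a Mittag--Leffler condition for the $\hbar^n$-quotients, and Proposition~\ref{pro:158} --- must be marshalled to this end. Once this surjectivity is granted, the rest is routine Noetherian bookkeeping.
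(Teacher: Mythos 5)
This theorem is not proved in the paper at all: it is imported verbatim from \cite{KS12}*{Th.~1.6.1 and~1.6.4}, so the only thing to assess is your reconstruction, and its key step in the hard direction (a)$\Rightarrow$(b) is false. From the triangle $\shm\xrightarrow{\hbar}\shm\to\gr(\shm)\xrightarrow{+1}$, the map $H^a(\shm)/\hbar H^a(\shm)\to H^a(\gr(\shm))$ is injective with cokernel the $\hbar$-torsion submodule $\ker\bigl(\hbar\colon H^{a+1}(\shm)\to H^{a+1}(\shm)\bigr)$, and cohomological completeness does not kill this torsion. Concretely, take $\shm=\gr\A\simeq\sho_X$ (viewed as an $\A$-module through $\A\to\gr\A$) placed in degree $0$: it is a coherent $\A$-module, hence satisfies both (a) and (b); here $\gr(\shm)\simeq\gr\A\ltens[\A]\gr\A$ has $H^{-1}\simeq H^{0}\simeq\sho_X$, so your $a$ equals $-1$, while $H^{-1}(\shm)=0$. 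Thus the reduction map is $0\to\sho_X$, not surjective, and indeed no morphism $\A^{\oplus n}[1]\to\shm$ whatsoever can induce an epimorphism onto $H^{-1}(\gr(\shm))$, so the induction cannot even start. Moreover, even where the lift exists, the cofiber $\shm'$ of $\A^{\oplus n}[-a]\to\shm$ has $H^{a-1}(\gr(\shm'))\simeq\ker\bigl((\gr\A)^{\oplus n}\to H^a(\gr(\shm))\bigr)$, which is generally nonzero: the minimal nonvanishing degree of $\gr$ drops instead of rising, so the iteration does not converge to a situation where Proposition~\ref{prop:homcc}~(b) applies. The torsion in the higher cohomology of $\shm$, visible in $\gr(\shm)$ one degree lower, is exactly what any correct argument must accommodate; the proof in \cite{KS12} does so by a different d\'evissage (working with the reductions modulo powers of $\hbar$, Noetherianity of $\gr\A$, and completeness expressed as a limit statement), never by lifting generators of the lowest-degree cohomology of $\gr(\shm)$.

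The easy direction (b)$\Rightarrow$(a) is also argued too loosely: an object of $\Der^+_\coh(\A)$ is in general not locally an iterated cone of finitely many shifts of free modules of finite rank (that would force it to be a bounded complex of finite free modules). The standard repair is a truncation argument: since the completeness criterion of Proposition~\ref{pro:eqvcoc}~(c) only involves $\Ext[{\Z[\hbar]}]{j}$ for $j=0,1$, a bounded-below complex is cohomologically complete as soon as each of its cohomology sheaves is, and one then shows that a coherent $\A$-module is locally $\hbar$-adically complete (finitely generated over the $\hbar$-adically complete Noetherian algebra $\A$), hence cohomologically complete. Your extraction of $\gr(\shm)\in\Der^+_\coh(\gr\A)$ from (b) is fine.
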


The next result follows from Gabber's theorem~\cite{Ga81}.

\begin{proposition}[{\cite{KS12}*{Prop.~2.3.18}}]\label{pro:invol}
Let~$\shm\in\RD^\Rb_{\coh}(\Ah[X])$. Then $\supp(\shm)$ \lp the support of $\shm$\rp\, is a closed
complex analytic subset of~$X$, involutive \lp{\em i.e.,} co-isotropic\rp\, for
the Poisson bracket on~$X$.
\end{proposition}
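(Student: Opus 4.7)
The statement is local on $X$ and pertains to a bounded complex, so since $\supp(\shm) = \bigcup_i \supp(H^i(\shm))$ with each $H^i(\shm)$ coherent over $\Ah$, the plan is to reduce at once to the case where $\shm$ is a single coherent $\Ah$-module. The overall strategy is to transport the support question from $\shm$ to a coherent $\sho_X$-module obtained from an $\A$-lattice, and then to invoke Gabber's involutivity theorem.

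First I would construct a local $\A$-lattice. Around a given point pick finitely many generators $u_1,\dots,u_n$ of $\shm$ over $\Ah$, and set $\shm_0 \eqdot \sum_i \A u_i \subset \shm$. By Theorem~\ref{th:ks125}, $\A$ is left Noetherian, so $\shm_0$ is a coherent $\A$-module; moreover $\Ah \tens[\A] \shm_0 \simeq \shm$ by construction, so $\shm_0$ has no $\hbar$-torsion and $\supp(\shm_0) = \supp(\shm)$ on that neighborhood.

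Next I would pass from $\shm_0$ to $\gr(\shm_0) = \shm_0 / \hbar \shm_0$. Theorem~\ref{th:formalfini2} tells me that $\shm_0$ is cohomologically complete and that $\gr(\shm_0)$ is a coherent $\gr(\A) \simeq \sho_X$-module. A Nakayama-type argument then identifies the two supports: if $\gr(\shm_0)$ vanishes on an open subset $V$, then Proposition~\ref{prop:homcc}~(b) applied to $\shm_0|_V$ forces $\shm_0|_V = 0$. Hence $\supp(\shm_0) = \supp(\gr(\shm_0))$, and since the right-hand side is the support of a coherent $\sho_X$-module, it is a closed complex analytic subset locally; patching yields the first assertion.

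For the involutivity, I would appeal directly to Gabber's theorem. The $\hbar$-adic filtration on $\A$ has associated graded isomorphic to $\sho_X$, and by formula~\eqref{eq:poisson} the commutator on $\A$ induces the prescribed Poisson bracket on $\sho_X$. Gabber's theorem applied to the coherent $\gr(\A)$-module $\gr(\shm_0)$ then asserts that its support is co-isotropic for this bracket, which is a local statement that patches into co-isotropy of $\supp(\shm)$ globally. The only substantial ingredient is Gabber's theorem itself, used as a black box; the rest of the argument consists of routine manipulations of lattices and the cohomologically complete formalism recalled above, so the only genuinely delicate point is making sure that passing to an $\A$-lattice and reducing modulo $\hbar$ preserves the support, which is precisely what the \textbf{Nakayama for cohomologically complete modules} provides.
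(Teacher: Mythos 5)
The paper itself offers no proof of this proposition: it is recalled verbatim from \cite{KS12}*{Prop.~2.3.18} with the remark that it follows from Gabber's theorem, so the only comparison possible is with that intended argument. Your outline is essentially that standard argument (reduce to a single coherent module, take a local $\A$-lattice $\shm_0$, identify $\supp(\shm)$ with the support of the $\sho_X$-coherent module $\gr(\shm_0)$, then invoke Gabber), and the reduction steps, including the Nakayama step via cohomological completeness, are sound. Two steps, however, are justified by the wrong reasons as written.

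First, the coherence of $\shm_0=\sum_i\A u_i$ does not follow merely from ``$\A$ is left Noetherian'': over a Noetherian ring a locally finitely generated \emph{submodule of a coherent module} is coherent, but $\shm$ is coherent over $\Ah$, not over $\A$. What is needed is that $\ker(\A^{\oplus n}\to\shm)=\A^{\oplus n}\cap\ker(\Ah^{\oplus n}\to\shm)$ is $\A$-coherent, i.e.\ the local existence of lattices proved in \cite{KS12}; this is standard but is a genuine lemma, not a formal consequence of Noetherianity. Second, the Gabber step is misstated: Gabber's theorem is not a statement about a module over the commutative Poisson algebra $\gr(\A)\simeq\sho_X$ (the support of a coherent $\sho_X$-module can be an arbitrary analytic subset); it concerns a finitely generated module with a good filtration over a filtered ring whose associated graded is commutative Noetherian and whose commutator drops the filtration degree. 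Here one applies it at stalks (where the rings are Noetherian) to $\Ah$ equipped with the filtration $F_k\Ah=\hbar^{-k}\A$ of \eqref{eq:newfilt} and with the good filtration $\hbar^{-k}\shm_0$ on $\shm$; the associated graded ring is then $\sho_X[T,T^{-1}]$ (not $\sho_X$), and one must descend the conclusion --- the radical of the annihilator of the graded module is stable under the induced bracket --- from $\sho_X[T,T^{-1}]$ to $\sho_X$, and finally pass from this stalkwise ideal-theoretic statement to involutivity of the analytic set $\supp(\gr\shm_0)$ via coherence of the radical ideal and the analytic Nullstellensatz. None of this is difficult, and it is exactly what the proof in \cite{KS12} does, but your last paragraph conflates the filtered ring with its graded and thereby elides the actual input of Gabber's theorem.
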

\begin{remark}
One shall be aware that the support of a coherent
$\A$-module is not involutive in general. Indeed,
any coherent $\gr\A$-module may be
regarded as an $\A$-module.
Hence any closed analytic subset can be the support of a coherent $\A$-module.
\end{remark}

\section{\DQ-modules along $\Lambda$}

\subsection{A variation on a theorem of~\cite{Ka03}}

In order to prove Lemma~\ref{le:grAL1} below, we need  a slight modification of a result of~\cite{Ka03}.

Let $\shr$ be a ring on a topological space $X$, and
let $\st{F_n(\shr)}_{n\in\Z}$ be a filtration of $\shr$ which satisfies
\bna
\item $\shr=\bigcup_{n\in\Z}\Fi{n}$,
\item $1\in\Fi{0}$,
\item $\Fi{m}\cdot\Fi{n}\subset\Fi{m+n}$.
\ee
We set 
\eqn
&&\grF_{\ge0}(\shr)=\soplus_{n\ge0}\grF_n(\shr).
\eneqn

\begin{proposition}\label{pro:ka03}
Assume that
\banum
\item $\Fi{0}$ and $\grF_{\ge0}(\shr)$ are Noetherian rings,
\item $\grF_n(\shr)$ is a coherent $\Fi{0}$-module for any $n\ge0$.
\eanum
Then $\shr$ is Noetherian.
\end{proposition}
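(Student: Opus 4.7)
The strategy is to truncate the filtration below degree zero and reduce to the $\N$-filtered case, which is essentially the statement proved in~\cite{Ka03}.

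Define a new filtration $\{G_n(\shr)\}_{n\in\Z}$ by $G_n(\shr):=F_n(\shr)$ for $n\geq 0$ and $G_n(\shr):=0$ for $n<0$. The filtration axioms (a)--(c) are preserved: since $\{F_n(\shr)\}$ is increasing, $\shr=\bigcup_{n\geq 0}F_n(\shr)=\bigcup_n G_n(\shr)$; one has $1\in G_0(\shr)=F_0(\shr)$; and $G_m\cdot G_n\subset G_{m+n}$ is trivial when either index is negative and reduces to the $F$-case otherwise. A degree-wise computation yields
\[
\mathrm{gr}^G(\shr)\;\simeq\;F_0(\shr)\oplus\bigoplus_{n\geq 1}\mathrm{gr}^F_n(\shr)\;=\;\mathrm{gr}^F_{\geq 0}(\shr),
\]
which is Noetherian by~(i); moreover each $\mathrm{gr}^G_n(\shr)=\mathrm{gr}^F_n(\shr)$ ($n\geq 0$) is $F_0(\shr)$-coherent by~(ii). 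Thus it suffices to prove Noetherianity of $\shr$ for the $\N$-filtration $G_\bullet$, which crucially satisfies $G_{-1}(\shr)=0$.

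For this $\N$-filtered situation one runs the classical ``symbol calculus $+$ induction'' argument. Given a (locally) finitely generated left ideal $\shi\subset\shr$, endow it with the induced filtration $G_n\shi:=\shi\cap G_n(\shr)$. The associated graded $\mathrm{gr}^G(\shi)=\bigoplus_n G_n\shi/G_{n-1}\shi$ is a graded left submodule of the Noetherian graded ring $\mathrm{gr}^G(\shr)$, hence locally generated by finitely many homogeneous sections $\bar x_i$ of degrees $d_i$. Lift these to $x_i\in G_{d_i}(\shi)$ and show by induction on $n$ that the $x_i$'s locally generate $G_n(\shi)$: the symbol of a given $x\in G_n(\shi)$ is a combination of the $\bar x_i$'s, so subtracting the corresponding lift yields an element of $G_{n-1}(\shi)$ to which the inductive hypothesis applies. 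The induction bottoms out at $G_{-1}(\shi)=0$.

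The main technical obstacle is not the algebraic core but the sheaf bookkeeping. In particular one needs that each $F_n(\shr)$ (for $n\geq 0$) is itself an $F_0(\shr)$-coherent module; this is obtained by induction from hypothesis~(ii) and the short exact sequences $0\to F_{n-1}(\shr)\to F_n(\shr)\to \mathrm{gr}^F_n(\shr)\to 0$. That coherence is precisely what allows the local generators $\bar x_i$ of $\mathrm{gr}^G(\shi)$ to be realized by genuine local generators of $\shi$ compatible with the inductive step. These coherence-type refinements of a purely algebraic Rees/symbol argument are the ``slight modification'' of the result of~\cite{Ka03} alluded to at the start of the subsection.
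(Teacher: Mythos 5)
Your reduction to the truncated filtration is also how the paper begins, but the identification on which your whole induction rests is wrong. For the truncated filtration $G$ the degree-zero graded piece is $G_0(\shr)/G_{-1}(\shr)=F_0(\shr)$, whereas $\grF_0(\shr)=F_0(\shr)/F_{-1}(\shr)$. The proposition concerns a $\Z$-filtration and nothing forces $F_{-1}(\shr)=0$; in the application for which the statement is made ($\shr=\AL\subset\Ah$ with $F_k(\AL)=\AL\cap\hbar^{-k}\A$) one has $F_{-1}(\AL)=\hbar\A\neq0$, and then $\grr^G(\AL)\simeq\A\oplus\bigoplus_{n\ge1}T^{-n}I_\Lambda^n$, which is not $\grF_{\ge0}(\AL)\simeq\bigoplus_{n\ge0}T^{-n}I_\Lambda^n$. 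So hypothesis (a) does not give you Noetherianity of $\grr^G(\shr)$, and the engine of your symbol-lifting induction (finite generation of $\grr^G(\shi)$ over a Noetherian graded ring) is unjustified as written. Indeed, if $F_{-1}(\shr)$ were zero the statement would literally be the $\N$-filtered case already treated in~\cite{Ka03}, and no ``slight modification'' would be needed.

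The way around this, and it is what the paper does, is to keep the graded objects of the \emph{original} filtration: for a submodule $\shn\subset\shr^{\oplus m}$ with the induced filtration, $\grF_{\ge0}(\shn)=\bigoplus_{n\ge0}F_n(\shn)/F_{n-1}(\shn)$ (the degree-zero quotient divides out the negative part) is a graded submodule of $\grF_{\ge0}(\shr)^{\oplus m}$, hence coherent over the Noetherian ring $\grF_{\ge0}(\shr)$; lifting finitely many homogeneous local generators yields a locally finitely generated $\shr$-submodule $\shn'$ with $\grF_{\ge0}(\shn')=\grF_{\ge0}(\shn)$, whence $\shn=\shn'+F_0(\shn)$. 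The ``bottom'' of the argument is therefore not $G_{-1}(\shn)=0$ but the coherence of $F_0(\shn)$ over the Noetherian ring $\Fi{0}$ — this is exactly where hypothesis (a) on $\Fi{0}$ enters, and it is absent from your scheme. Relatedly, the ``sheaf bookkeeping'' you set aside is not a side issue: Noetherianity of a sheaf of rings in the sense of~\cite{Ka03} and~\cite{KS12} is more than stalkwise Noetherianity together with coherence of locally finitely generated ideals, and the paper does not redo it; it invokes \cite{Ka03}*{Theorem~A.20} for the truncated filtration, which reduces everything to precisely the statement about submodules $\shn$ with each $F_k(\shn)$ coherent over $\Fi{0}$, and then verifies that statement by the lifting argument above. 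Your proof would need either that theorem or a genuine substitute for it, in addition to repairing the graded-ring computation.
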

\begin{proof}
Define $\tw F_n(\shr)$ by
$$\tw F_n(\shr)=\bc
\Fi{n}&\text{if $n\ge0$,}\\
0&\text{if $n<0$.}\\
\ec$$

We shall apply \cite{Ka03}*{Theorem~A.20}
to $\tw F_k(\shr)$.
Hence in order to prove the theorem, it is enough to show
\eq
&&\text{\parbox{73ex}{for any positive integer $m$ and an open subset $U$ of $X$,
if an $\shr\vert_U$-submodule $\shn$ of $\shr^{\oplus m}\vert_U$
has the property that $F_k(\shn)\seteq\shn\cap\Fi{k}^{\oplus m}\vert_U$
is a coherent  $\Fi{0}\vert_U$-module for any $k\ge0$,
then $\shn$ is a locally finitely generated $\shr\vert_U$-module.}}
\label{noeth}
\eneq
Since $\grF_{\ge0}(\shr)$ is a Noetherian ring,
$\grF_{\ge0}(\shn)\seteq\soplus_{n\ge0}\grF_n(\shn)$ is a
coherent $\grF_{\ge0}(\shr)$-module.
Hence there exists locally a finitely generated $\shr$-submodule $\shn'$ of 
$\shn$
such that
$\grF_{\ge0}(\shn')=\grF_{\ge0}(\shn)$.
Hence we have $\shn=\shn'+F_{0}(\shn)$.
Since $F_{0}(\shn)$ is a locally finitely generated $\Fi{0}$-module,
$\shn$ is 
locally finitely generated $\shr$-module.
\end{proof}

\subsection{The algebroid $\AL$}
From now on, $X$ is a complex  manifold endowed with a \DQ-algebroid $\A$.

\begin{definition}[{\cite{KS12}*{Def.~2.3.10}}]\label{def:simplemod}
Let~$\Lambda$ be a smooth submanifold of~$X$  and
let~$\shl$ be a coherent $\A[X]$-module supported by~$\Lambda$.
One says that $\shl$ is simple along $\Lambda$ if
$\gr(\shl)$ is concentrated in degree $0$ and $H^0(\gr(\shl))$
is an invertible $\sho_\Lambda\tens[{\sho_X}]\gr(\A)$-module. (In particular,
$\shl$ has no $\hbar$-torsion.)
\end{definition}

Let~$\Lambda$ be a smooth 
submanifold of~$X$ and let~$\shl$ be  a coherent $\A[X]$-module
simple along $\Lambda$.
We set for short
\eqn
&&\Oh[\Lambda]\eqdot\sho_\Lambda\forl,\quad \Ohl[\Lambda]\eqdot\sho_\Lambda\Ls,\\
&&\DDh[\Lambda]\eqdot\shd_\Lambda\forl,\quad \DDhl[\Lambda]\eqdot\shd_\Lambda\Ls.
\eneqn

One proves that there is a natural isomorphism  of algebroids $\shend_{\CORO}(\shl)\simeq\shend_{\CORO}(\Oh[\Lambda])$ 
(\cite{KS12}*{Lem. 2.1.12}). 
Then the subalgebroid of~$\shend_{\CORO}(\shl)$ corresponding to the
subring
$\Dh[\Lambda]$ of~$\shend_{\CORO}(\Oh[\Lambda])$
is well-defined. We denote it by~$\DL$. Then (see~\cite{KS12}*{Lem.~7.1.1}):
\banum
\item
$\DL$ is isomorphic  to~$\DDh[\Lambda]$ as a $\CORO$-algebroid and $\gr(\DL)\simeq\shd_\Lambda$.
\item
The $\CORO$-algebra  $\DL$ is right and left Noetherian.
\eanum

We denote by~$I_\Lambda\subset \sho_X$ the defining ideal of~$\Lambda$.
Let~$\shi$ be the kernel of the composition
\eq\label{eq:shi}
&& \opb{\hbar}\A[X]\To[\hbar]\A[X]\To\gr\A\to\OO[\Lambda]\ltens[\sho_X]\gr\A.
\eneq
Then we have 
\eq\label{eq:shi2}
&&\shi/\A[X]\simeq I_\Lambda\tens[\sho_X]\gr\A.
\eneq
\begin{remark}
In~\cite{KS12}*{Ch.~7,~\S~1} we have used the symbol map $\sigma\cl\A\to\OO$. This map is only defined locally, but all results of this chapter are  of local nature. If nevertheless, one wants a global construction, then one has to replace the sequence two lines above Definition 7.1.2 of loc.\ cit.\ with~\eqref{eq:shi}.
\end{remark}
\begin{definition}[{\cite{KS12}*{Def.~7.1.2}}]
One denotes by~$\AL$ 
the $\CORO$-subalgebroid of~$\Ah[X]$ generated by~$\shi$.
\end{definition}

The ideal $\hbar\shi$ is contained in~$\A[X]$, hence acts on~$\shl$ and
one sees easily that $\hbar\shi$ sends $\shl$ to~$\hbar\shl$.
Hence, $\shi$ acts on~$\shl$ and defines a  functor $\AL\to\DL$.
We thus have the morphisms of algebroids
\eqn
&&\xymatrix{
{\A[X]\vert_\Lambda}\ar[r]\ar[dr]&{\AL\vert_\Lambda}\ar[d]\ar[r]&{\Ah[X]\vert_\Lambda}\ar[d]\\
&{\DL}\ar[r]&{\DLl}\,.
}\eneqn
In particular, $\shl$ is naturally an $\AL$-module    and $\gr(\DL)\simeq\shd_\Lambda$ is a $\gr(\AL)$-module.

\begin{example}\label{exa:2}
 We follow the notations of  Example~\ref{exa:1}. 
Let $\Lambda=M$. Then $ \shl\seteq\HWo/\bigl(\sum_i\HWo u_i\bigr)
\simeq \Oh[\Lambda]$ is simple along $\Lambda$ 
and  $\shi\subset\opb{\hbar}\A=\A(-1)$ is generated by $\opb{\hbar}u=(\opb{\hbar}u_1,\dots,\opb{\hbar}u_n)$. Identifying $\opb{\hbar}u_i$ with 
$\frac{\partial}{\partial{x_i}}$ we get an isomorphism $\DL\simeq \shd_\Lambda[[\hbar]]$.
\end{example}
From now on, and until the end of the proof of Proposition~\ref{pro:AL1} we work locally on $X$ and thus we may assume that there is an isomorphism $\gr\A\isoto\sho_X$.

We introduce a filtration $F\Ah$ on $\Ah$ by setting 
\eq\label{eq:newfilt}
&&F_k\Ah=\hbar^{-k}\A\mbox{ for }k\in\Z. 
\eneq
Therefore, there  is a natural isomorphism 
\eq
&&\grr_k^F\Ah\simeq T^{-k}\sho_X \mbox{ given by $\hbar\longleftrightarrow T$.}
\eneq
We endow $\AL$ with the induced filtration, that is, 
\eqn
\quad F_k\AL=\AL\cap F_k\Ah.
\eneqn
Recall (see~\cite{KS12}*{\S~1.4}) that for  a left Noetherian $\Chbar$-algebra $\shr$,
one says that a coherent $\shr$-module $\shp$  is locally projective
if the functor
\eqn
&&\hom[\shr](\shp,\scbul)\cl \mdcoh[\shr]\to\md[\Chbar_X]
\eneqn
is exact.
This is equivalent to  each  of the following conditions: (i) for each $x\in X$, the stalk
$\shp_x$ is projective as an $\shr_x$-module, (ii) for each $x\in X$, the stalk  $\shp_x$ is a flat $\shr_x$-module, 
(iii) $\shp$ is locally  a direct summand of a free $\shr$-module of finite rank.

Recall that one says that a ring $R$ has global homological  dimension $\leq d$ if both $\md[R]$ and $\md[R^\op]$ have 
homological dimension $\leq d$ (see~\cite{KS90}*{Exe.~I.28}). In such a case, we shall  write for short $\ghd(R)\leq d$. 

Also recall that $d_X$ denotes the complex dimension of $X$.
\begin{lemma}\label{le:grAL1} One has
\banum
\item $(\AL)^\loc\simeq\Ah$.
\item
The algebra  $\grr^F\AL$ is a graded commutative subalgebra of $\grr^F\Ah$.
\item
There are natural isomorphisms
$$\grr^F\AL\simeq \bigoplus_{k\in\Z}T^{-k}I_\Lambda^k\quad\text{and}\quad
\grr_{\geq0}^F\AL\simeq \bigoplus_{k\geq0}T^{-k}I_\Lambda^k,$$
where $I_\Lambda^k\seteq\sho_X$ for $k\le0$. 
\item
The sheaves of algebras $\grr^F\AL$ and $\grr^F_{\geq0}\AL$ are Noetherian.
\item
 For any $x\in X$, one has  $\ghd(\grr^F\AL)_x\leq d_X+1$. 
\ee
\end{lemma}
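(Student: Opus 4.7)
The plan is to argue locally. Since the paragraph preceding the lemma allows us to assume $\gr\A\simeq\sho_X$, fix generators $f_1,\ldots,f_r$ of $I_\Lambda$ forming a regular sequence (possible since $\Lambda\subset X$ is smooth of codimension $r$) and lift each to $\tilde f_i\in\A$. Then $\shi=\A+\sum_i\A\cdot\hbar^{-1}\tilde f_i$ inside $\opb\hbar\A$, and $\AL$ is the subalgebroid of $\Ah$ generated by $\A$ together with the $\hbar^{-1}\tilde f_i$. Note that $\Lambda$ is coisotropic (being the support of $\shl^\loc$, by Proposition~\ref{pro:invol}), so $\{f_i,f_j\}\in I_\Lambda$; this involutivity will be needed below to keep commutators within the expected graded pieces.

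Parts (a) and (b) are formal. For (a), the inclusions $\A\subset\AL\subset\Ah$ localize to $\Ah=\A^\loc\subset(\AL)^\loc\subset\Ah$. For (b), since $a\star b-b\star a\in\hbar\A$ for $a,b\in\A$, the graded ring $\grr^F\Ah\simeq\sho_X[T,T^{-1}]$ is commutative and hence so is its subring $\grr^F\AL$.

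The substantive part is (c). It is clear that $T=\sigma_{-1}(\hbar)$ and $T^{-1}f_i=\sigma_1(\hbar^{-1}\tilde f_i)$ belong to $\grr^F\AL$, so $\grr^F\AL$ contains the graded $\sho_X$-subalgebra $B\subset\sho_X[T,T^{-1}]$ generated by $T$ and by $T^{-1}f_1,\ldots,T^{-1}f_r$, whose degree-$k$ part is exactly $T^{-k}I_\Lambda^k$ for $k\geq0$ and $T^{-k}\sho_X$ for $k\leq0$. The reverse inclusion is the substantive content: for any $x\in F_k\AL$ with $k\geq 0$ one must show $\sigma_k(x)\in T^{-k}I_\Lambda^k$. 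I would write $x$ as an $\A$-linear combination of non-commutative monomials in the $\hbar^{-1}\tilde f_i$ and let $N$ be the maximal length of these monomials; then $x\in F_N\Ah$. If $N\leq k$, the claim follows from multiplicativity of the symbol together with commutativity of $\grr^F\Ah$. If $N>k$, then $\sigma_N(x)=0$, yielding a homogeneous relation $\sum_{|\alpha|=N}c_\alpha f^\alpha=0$ in $\sho_X$; the regular-sequence property of $(f_1,\ldots,f_r)$---equivalently, the freeness of $\gr_{I_\Lambda}\sho_X\simeq\sho_\Lambda[\bar U_1,\ldots,\bar U_r]$ over $\sho_\Lambda$---forces each $c_\alpha\in I_\Lambda$. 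Using the identity $\tilde f_i=\hbar\cdot(\hbar^{-1}\tilde f_i)$ inside $\AL$ to trade an $\tilde f_i$ factor in a coefficient for $\hbar$ times an extra $\hbar^{-1}\tilde f_i$, one rewrites the top-length monomials as monomials of length $<N$ plus a remainder in $\hbar\AL\cap F_N\Ah$; an induction on $N$ reduces to $N=k$. Carrying this rewriting out cleanly in the noncommutative ring $\AL$---and verifying that involutivity keeps the commutator corrections $[\hbar^{-1}\tilde f_i,\hbar^{-1}\tilde f_j]=\hbar^{-1}\{\tilde f_i,\tilde f_j\}+O(1)$ within the predicted graded pieces---is the main technical obstacle.

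Parts (d) and (e) then follow quickly from (c). By (c), $\grr^F\AL\simeq\sho_X[T,T^{-1}f_1,\ldots,T^{-1}f_r]$ and $\grr^F_{\geq0}\AL\simeq\sho_X[T^{-1}f_1,\ldots,T^{-1}f_r]$ are finitely generated $\sho_X$-algebras, hence Noetherian, giving (d). For (e), observe that $T\in\grr^F_{-1}\AL$ is a central non-zero-divisor since $\grr^F\AL$ embeds into the domain $\sho_X[T,T^{-1}]$. From (c) one computes $\grr^F\AL/(T)\simeq\gr_{I_\Lambda}\sho_X\simeq\sho_\Lambda[\bar U_1,\ldots,\bar U_r]$, whose global homological dimension at any point is $d_\Lambda+r=d_X$. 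The standard inequality $\ghd(R)\leq\ghd(R/tR)+1$ for a central non-zero-divisor $t$ then yields $\ghd(\grr^F\AL)_x\leq d_X+1$.
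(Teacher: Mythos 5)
Parts (a), (b) and (d) of your proposal coincide with the paper's argument, but the two remaining parts are not complete. For (c) you rightly isolate the reverse inclusion $\sigma_k(F_k\AL)\subset T^{-k}I_\Lambda^k$ as the substantive point, and the ingredients you name (involutivity of $I_\Lambda$, available because $\Lambda$ carries the simple module $\shl$ and is therefore coisotropic, and the freeness of $\gr_{I_\Lambda}\sho_X$ over $\sho_\Lambda$) are the right ones. But the rewriting you sketch does not close: when a top-length coefficient has symbol in $I_\Lambda$ and you write it as $\sum_i d_i\tilde f_i+\hbar e$, the trade $\tilde f_i=\hbar\,(\hbar^{-1}\tilde f_i)$ converts the offending term into $\hbar$ times a word of length $N{+}1$ (the $\hbar e$ part does drop to length $N{-}1$, but the $d_i\tilde f_i$ part does not), so the maximal word length increases and the announced ``induction on $N$ reducing to $N=k$'' does not terminate as described. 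Closing it needs a further mechanism (for instance an $\hbar$-adic limit argument using completeness of $\A$, or replacing word-length bookkeeping by the action of $\AL$ on $\shl$, which is how one sees $\grr^F_1\AL\simeq I_\Lambda$), and you yourself leave it as ``the main technical obstacle''. Since (d) and (e) rest on (c), this is a genuine gap in the proposal, whatever one thinks of the brevity of the paper's own treatment of (c).

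In (e) there is an actual error: the inequality $\ghd(R)\le \ghd(R/tR)+1$ for a central non-zero-divisor $t$ is not a standard fact and is false in general. Take $R=k[x,y]/(xy)$ and $t=x-1$: then $t$ is a non-zero-divisor and $R/tR\simeq k$ has global dimension $0$, yet the local ring of $R$ at the origin is not regular, so $\ghd(R)=\infty$. The inequality is valid when $t$ lies in the Jacobson radical (e.g.\ for local rings), but $R_x=(\grr^F\AL)_x$ is not local and $T$ is not in its radical (already $1+T$ is not a unit). This is exactly why the paper proves Lemma~\ref{le:alg}: one must add the hypothesis $\ghd(R[\opb{t}])\le d+1$ and then argue prime by prime, the primes containing $t$ being handled by the regular local ring argument and the primes not containing $t$ by the localization $R[\opb{t}]$ --- the latter are omitted entirely in your argument. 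The repair is immediate in your situation: you have $R_x/TR_x\simeq\sho_{\Lambda,x}[\bar U_1,\dots,\bar U_r]$ of global dimension $d_X$ and $R_x[\opb{T}]\simeq\sho_{X,x}[T,\opb{T}]$ of global dimension $d_X+1$, so invoking Lemma~\ref{le:alg} (as the paper does) instead of the false general inequality gives (e).
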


\begin{proof}
(a) is obvious since $\A\subset\AL\subset\Ah$. 

\spa
(b)  is obvious.

\spa
(c) $\grr_1^F(\AL)\simeq I_\Lambda$. Hence, $\grr_k^F\AL\simeq I_\Lambda^k$.

\spa
(d)  The commutative algebras  $\grr^F\AL$ and  $\grr_{\ge0}^F\AL$ 
are locally finitely presented  $\sho_X$-algebras.
Hence they are Noetherian.
(Note that the associated variety with  $\grr^F\AL$ is the deformation of normal bundle to $\La$.)

\spa
(e) For $x\in X$, set $R_x=(\grr^F\AL)_x$.
If $x\notin\Lambda$ , then $R_x\simeq \sho_{X,x}[T,\opb{T}]$  and $\ghd(R_x)\leq d_X+1$.
Assume now that $x\in\Lambda$. Then 
$R_x/TR_x\simeq \sho_{\Lambda,x}[y_1,\dots,y_n]$ (with $n=\codim_X\Lambda$)  has global homological  dimension $d_X$ and 
 $R_x[\opb{T}]\simeq \sho_{X,x}[T,\opb{T}]$ has global homological  dimension $d_X+1$. 
 Hence, $\ghd(R_x)\leq d_X+1$   by the classical  Lemma~\ref{le:alg} below.
\end{proof}

\begin{lemma}\label{le:alg}
Let $R$ be a commutative Noetherian ring and let $t\in R$ be a non-zero divisor. Assume that $R/tR$
 has global homological dimension $\leq d$ 
 and the localization $R[\opb{t}]$ 
has global homological dimension $\leq d+1$. 
Then $R$ has global homological dimension $\leq d+1$. 
\end{lemma}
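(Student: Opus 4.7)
The plan is to bound the flat dimension of $R$; since for a Noetherian commutative ring the flat and global homological dimensions agree, this will suffice. Concretely, I will show $\mathrm{Tor}^R_k(M,N)=0$ for every pair of $R$-modules $M,N$ and every $k\geq d+2$.

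The argument treats three special classes of $N$ and then glues them. If $N$ is an $R/tR$-module, the change-of-rings spectral sequence
\[E^2_{p,q}=\mathrm{Tor}^{R/tR}_p(\mathrm{Tor}^R_q(M,R/tR),N)\Longrightarrow\mathrm{Tor}^R_{p+q}(M,N)\]
is confined to the strip $0\leq q\leq 1$, since the Koszul resolution $0\to R\xrightarrow{t} R\to R/tR\to 0$ gives $\mathrm{Tor}^R_q(M,R/tR)=0$ for $q\geq 2$, and to $p\leq d$ by the hypothesis on $R/tR$; hence $\mathrm{Tor}^R_n(M,N)=0$ for $n\geq d+2$. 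If $N$ is a $t$-power-torsion module, filter $N=\varinjlim_n N[t^n]$, note that each successive quotient $N[t^n]/N[t^{n-1}]$ is annihilated by $t$, and iterate the previous case using that Tor commutes with filtered colimits. If $N$ is an $R[t^{-1}]$-module, flat base change along $R\to R[t^{-1}]$ gives $\mathrm{Tor}^R_k(M,N)\simeq\mathrm{Tor}^{R[t^{-1}]}_k(M\otimes_R R[t^{-1}],N)$, which vanishes for $k\geq d+2$ by the hypothesis on $R[t^{-1}]$.

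For a general $N$, consider the two short exact sequences
\[0\to N_t\to N\to N/N_t\to 0\quad\text{and}\quad 0\to N/N_t\to N[t^{-1}]\to Q\to 0,\]
with $N_t$ the $t$-torsion submodule of $N$ and $Q$ the cokernel of the localization map; both $N_t$ and $Q$ are $t$-power-torsion, so all three special cases apply. Chasing the long exact Tor sequences, the favorable variance of the second sequence, namely $\mathrm{Tor}_{k+1}^R(M,Q)\to\mathrm{Tor}_k^R(M,N/N_t)\to\mathrm{Tor}_k^R(M,N[t^{-1}])$, propagates the vanishing from the three special cases to $\mathrm{Tor}_k^R(M,N/N_t)$ and, via the first sequence, to $\mathrm{Tor}_k^R(M,N)$.

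The main subtlety lies precisely in this last index bookkeeping: if one tried instead to bound the injective dimension of $N$ via Ext, the analogous connecting map would be $\mathrm{Ext}^{k-1}(M,Q)\to\mathrm{Ext}^k(M,N/N_t)$, yielding only $\ghd(R)\leq d+2$ rather than the sharp $\ghd(R)\leq d+1$. Working with Tor and invoking the Noetherian identification of flat and global homological dimensions is precisely what produces the optimal bound.
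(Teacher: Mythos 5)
Your argument is correct, but it follows a genuinely different route from the paper. The paper reduces to local rings: global dimension is the supremum of the global dimensions of the localizations $R_{\mathfrak p}$, and for $t\notin\mathfrak p$ one quotes the hypothesis on $R[t^{-1}]$, while for $t\in\mathfrak p$ one uses Serre's characterization — $(R/tR)_{\mathfrak p}$ has finite global dimension, hence is regular local of dimension $\leq d$, and since $t$ stays a non-zero divisor in $R_{\mathfrak p}$, the ring $R_{\mathfrak p}$ is regular of dimension $\leq d+1$. You instead bound the Tor-dimension directly: the base-change spectral sequence for $R\to R/tR$ (together with the length-one Koszul resolution of $R/tR$, which is where the non-zero-divisor hypothesis enters), dévissage of $t$-power-torsion modules, flat base change to $R[t^{-1}]$, and the two short exact sequences splicing an arbitrary $N$ from its $t$-power-torsion part, $N[t^{-1}]$, and a torsion cokernel, with the index count in the second sequence giving the sharp $d+2$ vanishing threshold; Noetherianness is used only to convert the weak (Tor) bound into a bound on global dimension. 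The paper's proof is shorter if one grants regular local ring theory; yours is more purely homological, avoids regularity altogether, and in fact bounds the weak global dimension of any commutative ring satisfying the corresponding Tor-vanishing hypotheses, Noetherian or not. (Your closing remark that an Ext-based version must lose one degree is only a comment on that particular dévissage, not a genuine obstruction, but it does not affect the proof.)
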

\begin{proof}
(i) Let $\Spec(R)$ denote as usual the set of prime ideals  of $R$.
For $\prim\in \Spec(R)$,  denote by $R_\prim$ the localization of $R$ at $\prim$. It is well-known that $R$ has global homological  dimension $\leq d$ if and only if
for any $\prim\in\Spec(R)$, $R_\prim$  has global homological  dimension $\leq d$. 

\spa
(ii) Let $\prim\in \Spec(R)$ and assume that $t\notin \prim$. Then $R_\prim\simeq (R[\opb{t}])_\prim$ has
global homological  dimension   $\leq d+1$.

\spa
(iii) Let $\prim\in \Spec(R)$ and assume that $t\in \prim$. In this case, $R_\prim/tR_\prim\simeq (R/tR)_\prim$ has
global homological  dimension   $\leq d$.
This implies that $R_\prim$ is a regular  local ring of global homological  dimension   $\leq d+1$.
\end{proof}

\begin{proposition}[{see~\cite{KS12}*{Lem.~7.1.3} in the symplectic case}]\label{pro:AL1}
One has
\banum
\item
the  $\Chbar$-algebroid $\AL$ is right and left Noetherian,
\item
$\gr(\shn)\in \Derb_\coh(\gr\AL)$
for any $\shn\in\Derb_\coh(\AL)$.
\eanum
\end{proposition}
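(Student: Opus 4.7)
The plan is to use the filtration $F_k\AL=\hbar^{-k}\A\cap\AL$ from~\eqref{eq:newfilt} together with the structural information supplied by Lemma~\ref{le:grAL1}.

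For part (a), I would apply the filtered-ring Noetherianness criterion of Proposition~\ref{pro:ka03} locally to $\AL$. The isomorphism~\eqref{eq:shi2} gives $\A\subset\shi\subset\AL$, so
\[
F_0\AL=F_0\Ah\cap\AL=\A.
\]
The three hypotheses of Proposition~\ref{pro:ka03} then follow readily: $F_0\AL=\A$ is Noetherian by Theorem~\ref{th:ks125}; $\grr^F_{\ge0}\AL$ is Noetherian by Lemma~\ref{le:grAL1}(d); and for each $n\ge0$, $\grr^F_n\AL\simeq I_\Lambda^n$ is coherent as an $\sho_X$-module and hence also as an $\A$-module, since the $\A$-action factors through $\A/\hbar\A\simeq\sho_X$. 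This yields left Noetherianness of $\AL$, and the right-handed case is symmetric.

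For part (b), part (a) implies that $\gr\AL=\AL/\hbar\AL$ is again Noetherian, as the quotient of a Noetherian ring by a two-sided ideal, so $\Derb_\coh(\gr\AL)$ is a well-defined category. A standard truncation argument on the triangulated category $\Derb_\coh(\AL)$ reduces the claim to showing that for a single coherent $\AL$-module $\shn$ the cohomology sheaves of $\gr(\shn)$ are coherent over $\gr\AL$. From the distinguished triangle $\shn\xrightarrow{\hbar}\shn\to\gr(\shn)\xrightarrow{+1}$ one reads off
\[
H^0(\gr(\shn))=\shn/\hbar\shn,\qquad H^{-1}(\gr(\shn))=\ker(\hbar\colon\shn\to\shn),
\]
with vanishing in all other degrees; both nonzero cohomology sheaves are coherent $\AL$-modules by Noetherianness (as a quotient of and as a subsheaf of $\shn$, respectively), and since $\hbar$ acts on them as zero, they are coherent over $\gr\AL$. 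The boundedness of $\gr(\shn)$ for a general $\shn\in\Derb_\coh(\AL)$ follows immediately from the long exact cohomology sequence of the same triangle.

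The only genuinely delicate point is the identification $F_0\AL=\A$; once this is in place, everything else is a formal consequence of Theorem~\ref{th:ks125}, Lemma~\ref{le:grAL1}, and the elementary observation that Noetherianness passes to quotients. Notably, finite global homological dimension of $\AL$ is not needed: the $\hbar$-triangle gives a two-term description of $\gr(\shn)$ directly, sidestepping the need for a bounded free resolution.
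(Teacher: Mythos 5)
Your proof is correct. Part (a) is essentially the paper's own argument: the same filtration $F_k\AL=\AL\cap\hbar^{-k}\A$, the same appeal to Proposition~\ref{pro:ka03} with $F_0\AL=\A$ (which, as you note, is immediate from $\A\subset\shi\subset\AL$), $\grr^F_{\ge0}\AL$ Noetherian by Lemma~\ref{le:grAL1}, and the $I_\Lambda^k$ coherent over $\A$ through $\A/\hbar\A\simeq\sho_X$. For part (b), however, you take a genuinely different and more elementary route. The paper represents $\shn$ by a bounded-above complex of locally free $\AL$-modules of finite rank, applies the truncation $\tau^{\geq j}$ for $j\ll0$ to obtain a bounded complex of coherent $\AL$-modules without $\hbar$-torsion, and then computes $\gr(\shn)$ termwise as the reduction mod $\hbar$ of that complex. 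You instead reduce by cohomological truncation to a single coherent $\AL$-module and use the distinguished triangle $\shn\to[\hbar]\shn\to\gr(\shn)\to[+1]$ (the same triangle the paper uses in the proof of Proposition~\ref{pro:sscohco}) to identify $H^0(\gr(\shn))\simeq\shn/\hbar\shn$ and $H^{-1}(\gr(\shn))\simeq\ker(\hbar\cl\shn\to\shn)$, both coherent over $\AL$ by (a) and killed by $\hbar$, hence coherent over $\gr\AL$; this avoids resolutions altogether and only uses that $\gr\AL$ is a coherent ring (quotient of the Noetherian $\AL$ by the central non-zero-divisor $\hbar$), which you need anyway for $\Derb_\coh(\gr\AL)$ to be a triangulated subcategory in your truncation step. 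One small caveat on your closing remark: the paper's argument also does not require finite homological dimension or a globally bounded free resolution --- it only uses a bounded-above locally free resolution plus smart truncation (the homological dimension bound of Lemma~\ref{le:grAL1}~(e) enters later, in Corollary~\ref{cor:ayl0}) --- so what your variant buys is economy and transparency rather than weaker hypotheses.
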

\begin{proof}
(a)  follows from Proposition~\ref{pro:ka03} since  $\sha$ is Noetherian by Theorem~\ref{th:ks125},
$\grr_{\geq0}\AL$ is Noetherian  by Lemma~\ref{le:grAL1} and the $I^k_\Lambda$'s are coherent $\A$-modules since they are coherent 
$\sho_X$-modules.

\spa
(b)  Let us represent $\shn$ by a complex $\shl^{\scbul}$ bounded from above of locally free $\AL$-modules of finite rank. Then $H^i(\shl^{\scbul})\simeq0$ for $i\ll0$. Replacing $\shl^{\scbul}$ with $\tau^{\geq j}\shl^{\scbul}$ for $j\ll 0$ we find a bounded complex $\shl^\scbul$ of coherent $\AL$-modules for which $\hbar$ is injective. Now 
$\gr(\shn)$ is represented by the complex $\shl^\scbul/\hbar \shl^\scbul$
and the result follows.

\spa
(c) Let $d$ denote the projective dimension 
\end{proof}

In the sequel, for $\shn\in\Derb(\AL)$ we set
\eq\label{eq:notgrl}
&&\grl(\shn)\eqdot  \gr(\DL\ltens[\AL]\shn)\simeq    \D[\Lambda]\ltens[\gr(\AL)]\gr(\shn).
\eneq

\begin{corollary}\label{cor:ayl0}
If $\shn\in\Derb_\coh(\AL)$, then $\grl(\shn)\in\Derb_\coh(\D[\Lambda])$ and $\chv(\grl(\shn))$ is a closed $\C^\times$-conic complex analytic subset of $T^*\Lambda$.
\end{corollary}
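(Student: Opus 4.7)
The plan is to deduce coherence of $\grl(\shn)$ over $\D[\Lambda]$ via a bounded finite-free resolution over $\gr(\AL)$, and then invoke standard characteristic variety theory for coherent $\D[\Lambda]$-modules.

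By Proposition~\ref{pro:AL1}(b), $\gr(\shn)$ lies in $\Derb_\coh(\gr(\AL))$. To produce locally a bounded resolution of $\gr(\shn)$ by finitely generated free $\gr(\AL)$-modules, one needs $\gr(\AL)$ to be Noetherian with finite global homological dimension at each stalk. Noetherianness is immediate: $\gr(\AL) = \AL/\hbar\AL$ is a quotient of the Noetherian ring $\AL$ from Proposition~\ref{pro:AL1}(a). The global dimension bound has to be transferred from Lemma~\ref{le:grAL1}(e), which gives $\ghd((\gr^F\AL)_x) \le d_X+1$ for the $F$-graded ring, to the distinct ring $\gr(\AL) = \gr^\hbar(\AL)$; this is carried out stalk-wise via a filtration-comparison argument in the spirit of Lemma~\ref{le:alg}, using that the $F$-filtration induces a filtration on $\gr(\AL)$ whose associated graded is a quotient of $\gr^F\AL$. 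Once this is established, applying the functor $\D[\Lambda] \tens[\gr(\AL)] -$ term by term to the bounded free resolution sends each $\gr(\AL)^m$ to $\D[\Lambda]^m$, yielding a bounded complex of coherent $\D[\Lambda]$-modules representing $\grl(\shn) \simeq \D[\Lambda] \ltens[\gr(\AL)] \gr(\shn)$. Hence $\grl(\shn) \in \Derb_\coh(\D[\Lambda])$.

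The claim on $\chv(\grl(\shn))$ then reduces to standard characteristic variety theory. By definition, $\chv(\grl(\shn)) = \bigcup_i \chv(H^i(\grl(\shn)))$, a finite union since $\grl(\shn)$ is bounded. Each cohomology sheaf is coherent over $\D[\Lambda]$, so its characteristic variety is a closed $\C^\times$-conic complex analytic subset of $T^*\Lambda$ by the classical theory, and a finite union of such subsets retains the same properties. The main technical obstacle is the global dimension transfer from $\gr^F\AL$ to $\gr(\AL)$: Lemma~\ref{le:grAL1}(e) supplies the former very explicitly, but the two gradings are not the same, and comparing projective dimensions across the change of filtration requires careful local bookkeeping at each stalk of $X$.
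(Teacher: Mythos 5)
Your overall strategy is the paper's: reduce to a bounded local resolution of $\gr(\shn)$ by finite free (projective) $\gr\AL$-modules, tensor it termwise with $\D[\Lambda]$ to land in $\Derb_\coh(\D[\Lambda])$, and then quote the classical theory of characteristic varieties of coherent $\D[\Lambda]$-modules. The gap is precisely in the step you single out, the finiteness of the homological dimension of $\gr(\AL)=\AL/\hbar\AL$, and as you justify it the transfer does not work. Knowing $\ghd(\grr^F\AL)_x\le d_X+1$ (Lemma~\ref{le:grAL1}~(e)) gives no control on a \emph{quotient} of $\grr^F\AL$: finite global dimension is not inherited by quotients (already $k[x]\to k[x]/(x^2)$ fails). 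Moreover Lemma~\ref{le:alg} runs in the opposite direction: it bounds $\ghd(R)$ in terms of $\ghd(R/tR)$ and $\ghd(R[\opb{t}])$, it does not bound a quotient in terms of the ring. So neither of the two tools you invoke can produce the bound you need for the associated graded of the filtration induced on $\gr(\AL)$, and the ``filtration-comparison argument'' is left without a valid engine.

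The repair is to identify that associated graded explicitly rather than merely as ``a quotient of $\grr^F\AL$''. The filtration induced by $F$ on $\gr(\AL)$ is exhaustive and vanishes in negative degrees (since $F_{-1}\AL=\hbar\A\subset\hbar\AL$), and its associated graded is $\grr^F\AL/T\,\grr^F\AL\simeq\bigoplus_{k\ge0}I_\Lambda^k/I_\Lambda^{k+1}$, i.e.\ the symmetric algebra of the conormal module of the smooth submanifold $\Lambda$, locally $\sho_\Lambda[y_1,\dots,y_n]$ with $n=\codim_X\Lambda$; this is exactly the ring $R_x/TR_x$ computed inside the proof of Lemma~\ref{le:grAL1}~(e), whose global dimension $d_X$ is seen by direct inspection, not by descent from $\grr^F\AL$. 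One then applies the standard bound for a ring with a positive exhaustive filtration and Noetherian associated graded (as for $\shd_\Lambda$ with the order filtration) to get $\ghd(\gr(\AL)_x)\le d_X$, which yields the bounded local finite free resolution. With that step fixed, the remainder of your argument --- Noetherianity of $\gr\AL$ as a quotient of $\AL$, termwise tensoring the resolution with $\D[\Lambda]$, and $\chv(\grl(\shn))$ as the finite union of the $\chv(H^i)$, each a closed $\C^\times$-conic analytic subset --- is correct and coincides with the paper's (admittedly very terse) proof.
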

\begin{proof}
By Proposition~\ref{pro:AL1}~(b) and Lemma~\ref{le:grAL1}~(e), $\gr\shn$ is locally quasi-isomorphic to a bounded complex of projective  $\gr\AL$-modules of finite type. To conclude, note that if $\shp$ is a projective  $\gr\AL$-modules of finite type, then $\D[\Lambda]\ltens[\gr(\AL)]\gr(\shp)$ is concentrated in degree $0$ and is $\D[\Lambda]$-coherent. The result for $\chv(\grl(\shn))$ follows.
\end{proof}

\begin{proposition}[{see~\cite{KS12}*{Prop.~7.1.8}} in the symplectic case]\label{pro:SScar2b}
Let~$\shn$ be a coherent $\AL$-module.  Then
\eq
&&\rhom[\AL](\shn,\shl)\in\Derb(\Chbar_X),\label{eq:inDb}\\
&&\SSi(\rhom[\AL](\shn,\shl))=\chv(\grl\shn)\label{eq:chvn}.
\eneq
\end{proposition}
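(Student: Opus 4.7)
The plan is to follow the strategy of \cite{KS12}*{Prop.~7.1.8}, which handled the symplectic case. The three ingredients are: (i) the factorization of the $\AL$-action on $\shl$ through $\DL\simeq\DDh[\Lambda]$, which is available since $\shl$ is simple along $\Lambda$; (ii) the machinery of cohomologically complete modules, which reduces microsupport computations to the associated graded level via Proposition~\ref{pro:sscohco}; and (iii) the classical theorem identifying the microsupport of the solution complex of a coherent $\shd_\Lambda$-module with its characteristic variety.

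First I would establish \eqref{eq:inDb}. Working locally, Proposition~\ref{pro:AL1}~(a) (Noetherianity of $\AL$) together with the finite global-dimension bound of Lemma~\ref{le:grAL1}~(e) and a standard filtered-resolution argument yield a bounded resolution of $\shn$ by locally free $\AL$-modules of finite rank. Applying $\rhom[\AL](\scbul,\shl)$ then gives a bounded complex whose terms are finite direct sums of copies of $\shl$, hence an object of $\Derb(\Chbar_X)$.

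For \eqref{eq:chvn}, the first step is cohomological completeness: since $\shl$ is $\A$-coherent, it is cohomologically complete by Theorem~\ref{th:formalfini2}, and then $\rhom[\AL](\shn,\shl)$ is cohomologically complete by Proposition~\ref{prop:homcc}~(a). Proposition~\ref{pro:sscohco} therefore reduces the microsupport computation to that of $\gr\rhom[\AL](\shn,\shl)$. Using the local free resolution above together with the absence of $\hbar$-torsion in $\shl$, the functor $\gr$ commutes with $\rhom[\AL](\scbul,\shl)$ term-by-term. The factorization $\AL\to\DL$ together with the isomorphisms $\gr\shl\simeq\sho_\Lambda$ and $\gr\DL\simeq\shd_\Lambda$ then yields $\gr\rhom[\AL](\shn,\shl)\simeq\rhom[\shd_\Lambda](\grl\shn,\sho_\Lambda)$. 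By Corollary~\ref{cor:ayl0}, $\grl\shn$ is a coherent $\shd_\Lambda$-module whose characteristic variety is $\C^\times$-conic complex analytic, so the Kashiwara--Schapira theorem on solutions of $\shd$-modules (\cite{KS90}*{Th.~11.3.3}) identifies the microsupport of its solution complex with $\chv(\grl\shn)$, giving \eqref{eq:chvn}.

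The main obstacle is carefully justifying the isomorphism $\gr\rhom[\AL](\shn,\shl)\simeq\rhom[\shd_\Lambda](\grl\shn,\sho_\Lambda)$: one must check that the $\hbar$-adic filtration on the chosen free resolution of $\shn$ is compatible with $\rhom[\AL](\scbul,\shl)$ and that $\shl$ being $\hbar$-torsion free makes $\gr$ exact on the resulting complex. In the non-symplectic case $\Lambda$ is only a smooth submanifold rather than a Lagrangian, but this distinction enters only through the definitions of $\DL$ and $\gr\DL$, so the argument is formally identical to the symplectic case once Proposition~\ref{pro:AL1} and Corollary~\ref{cor:ayl0} are in place.
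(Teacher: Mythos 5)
Your proof of \eqref{eq:chvn} is essentially the paper's: factor the action of $\AL$ on $\shl$ through $\DL$, note that $\shl$ is cohomologically complete (Theorem~\ref{th:formalfini2}) so that $F\eqdot\rhom[\AL](\shn,\shl)$ is cohomologically complete by Proposition~\ref{prop:homcc}~(a), identify $\gr(F)$ with $\rhom[{\D[\Lambda]}](\grl\shn,\sho_\Lambda)$, and conclude with Proposition~\ref{pro:sscohco}, Corollary~\ref{cor:ayl0} and \cite{KS90}*{Th.~11.3.3}. That part is correct (and note that the $\gr$--$\rhom$ commutation only needs a bounded-above locally free resolution together with the $\hbar$-torsion freeness of $\shl$, so it does not depend on the questionable step below).

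The gap is in your argument for \eqref{eq:inDb}. You invoke Noetherianity of $\AL$, the bound of Lemma~\ref{le:grAL1}~(e), and ``a standard filtered-resolution argument'' to produce a \emph{bounded} resolution of $\shn$ by locally free $\AL$-modules of finite rank. But Lemma~\ref{le:grAL1}~(e) bounds the global homological dimension of the graded ring $\grr^F\AL$, not of $\AL$ itself; nowhere is it established that $\AL$ has finite global dimension, and transferring the bound from $\grr^F\AL$ to $\AL$ is a genuine filtered-ring theorem (one must check exhaustiveness and completeness of the filtration $F_k\AL=\AL\cap\hbar^{-k}\A$ and run a Zariskian-type argument), not routine truncation; and even granting it, truncating a free resolution yields locally projective rather than locally free syzygies, so further care is needed. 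The paper avoids all of this: it only uses that $F\in\RD^+(\Chbar_X)$ (a bounded-above locally free resolution suffices), observes that $\gr(F)\simeq\rhom[{\D[\Lambda]}](\grl\shn,\sho_\Lambda)$ is bounded because $\grl\shn$ is $\D[\Lambda]$-coherent and $\D[\Lambda]$ has finite homological dimension, and then deduces boundedness of $F$ from cohomological completeness via the vanishing criterion of Proposition~\ref{pro:158}. Either supply the missing filtered-ring argument for the finite global dimension of $\AL$, or replace your step for \eqref{eq:inDb} by this completeness argument; the rest of your proof then goes through as written.
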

\begin{proof}
(i) One has
\eqn
&&\rhom[\AL](\shn,\shl)\simeq\rhom[\DL](\DL\ltens[\AL]\shn,\shl).
\eneqn
Set $F=\rhom[\DL](\DL\ltens[\AL]\shn,\shl)$. Then $F\in\RD^+(\Chbar_X)$, 
 $F$ is cohomologically
complete by Proposition~\ref{prop:homcc} and $\gr(F)\simeq \rhom[{\D[\Lambda]}](\grl\shn,\sho_\Lambda)$. 

\spa
(ii) We have $\gr F\in\Derb(\Chbar_X)$ by Lemma~\ref{le:grAL1}~(c). This implies~\eqref{eq:inDb} by Proposition~\ref{pro:158}.

\spa
(iii) We have  $\SSi(F)=\SSi(\gr(F))$ by Proposition~\ref{pro:sscohco}. On the other hand,
$\gr(F)\simeq \rhom[{\D[\Lambda]}](\grl\shn,\sho_\Lambda)$ and the microsupport of this complex is equal
to~$\chv(\grl\shn)$ by~\cite{KS90}*{Th~11.3.3}.
\end{proof}

\begin{definition}\label{def:ALlattice}
A coherent $\AL$-submodule $\shn$ of a coherent $\Ah[X]$-module $\shm$ is called
an $\AL$-lattice
of~$\shm$ if $\shn$ generates $\shm$ as an $\Ah[X]$-module.
\end{definition}
One easily proves that if $\shn$ is an $\AL$-lattice of $\shm$, then $\chv(\gr\shn)$ depends only on $\shm$. 
\begin{notation}
For a coherent  $\Ah[X]$-module $\shm$, one sets 
 $\Chl(\shm)\eqdot \chv(\grl\shn)$ for $\shn$ a (locally defined)  $\AL$-lattice of $\shm$.
\end{notation}

\subsection{Reminders on holonomic \DQ modules}

We shall recall here the main results of~\cite{KS12}*{Ch.~7}. 

In this subsection, we assume that $X$ is symplectic and that $\Lambda$ is Lagrangian. 
In this case, $\gr(\AL)\simeq\D[\Lambda]$ as an algebroid
and thus $\grl(\shn)\simeq\gr(\shn)$.
\begin{definition}Assume that $X$ is symplectic and $\Lambda$ is Lagrangian. 
An object $\shn$ of~$\Derb_\coh(\AL)$ is holonomic if 
$\gr(\shn)$ belongs to~$\Db_\hol(\shd_\Lambda)$.
\end{definition}

\begin{theorem}[{see~\cite{KS12}*{Th.~7.1.10}}]\label{th:7110}
Assume that $X$ is symplectic. 
Let~$\shn$ be a holonomic $\AL$-module.
\banum
\item
The objects $\rhom[\AL](\shn,\shl)$ and $\rhom[\AL](\shl,\shn)$
belong to~$\Derb_\Cc(\Chbar_\Lambda)$ and their microsupports
are contained in~$\chv(\gr\shn)$.
\item
There is a natural isomorphism in~$\Derb_\Cc(\Chbar_\Lambda)$
\eq\label{eq:dualDhh}
&&\rhom[\AL](\shn,\shl)\isoto\RDD_X\bl\rhom[\AL](\shl,\shn)\br\,[d_X].
\eneq
\eanum
\end{theorem}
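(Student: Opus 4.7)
For $F\eqdot\rhom[\AL](\shn,\shl)$, I would apply Proposition~\ref{pro:SScar2b} directly, obtaining $F\in\Derb(\Chbar_X)$ with $\SSi(F)=\chv(\grl\shn)$. Because $\Lambda$ is Lagrangian in the symplectic $X$, one has $\gr(\AL)\simeq\shd_\Lambda$ and hence $\grl\shn=\gr\shn$; holonomicity of $\shn$ means $\gr\shn$ is a bounded holonomic $\shd_\Lambda$-complex, so $\chv(\gr\shn)$ is a $\C^\times$-conic complex analytic Lagrangian subset of $T^*\Lambda$. A bounded complex on $\Lambda$ whose microsupport is contained in such a Lagrangian is $\C$-constructible by Kashiwara's constructibility theorem (\cite{KS90}*{Ch.~8}), giving the first half of~(a).

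For $G\eqdot\rhom[\AL](\shl,\shn)$, Proposition~\ref{pro:SScar2b} does not apply directly (the arguments of $\rhom$ are in the wrong order), so I would work through the graded. Locally, choose a bounded resolution of $\shn$ by locally free $\AL$-modules of finite rank, available thanks to Proposition~\ref{pro:AL1} and the finite global dimension bound $\le d_X+1$ of Lemma~\ref{le:grAL1}(e); one then computes
$$\gr G \simeq \rhom[{\shd_\Lambda}](\sho_\Lambda,\gr\shn),$$
which is $\C$-constructible with microsupport in $\chv(\gr\shn)$ by the $\shd$-module constructibility theorem. Now $G$ is cohomologically complete by Proposition~\ref{prop:homcc}(a), so Proposition~\ref{pro:sscohco} yields $\SSi(G)=\SSi(\gr G)\subset\chv(\gr\shn)$, and Proposition~\ref{pro:158} (with its analogue for lower truncations) forces $G$ to be cohomologically bounded. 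Boundedness plus a $\C^\times$-conic complex analytic Lagrangian microsupport then imply $G\in\Derb_\Cc(\Chbar_\Lambda)$, completing~(a).

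For~(b) I would lift the holonomic duality on $\Lambda$ through $\hbar$-adic completion. Mebkhout--Kashiwara duality for the holonomic $\shd_\Lambda$-complex $\gr\shn$ provides a natural isomorphism
$$\rhom[{\shd_\Lambda}](\gr\shn,\sho_\Lambda) \isoto \RDD_\Lambda\bigl(\rhom[{\shd_\Lambda}](\sho_\Lambda,\gr\shn)\bigr)[\dim_\C\Lambda],$$
and the standard comparison of $\RDD_X$ with $\RDD_\Lambda$ on sheaves supported on the codimension-$\dim_\C\Lambda$ submanifold $\Lambda$ absorbs the extra shift to recover the $[d_X]$ of the statement. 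Both sides of~(b) are cohomologically complete, so by Proposition~\ref{prop:homcc}(b) an $\AL$-linear morphism between them is determined by its reduction modulo $\hbar$, and is an isomorphism iff that reduction is. It therefore suffices to construct a natural $\AL$-linear map whose associated graded recovers the Mebkhout--Kashiwara isomorphism above; the cleanest route uses a Verdier-type dualizing complex $\omega_\AL$ for $\AL$-modules along $\Lambda$ together with biduality for holonomic $\AL$-modules.

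The main obstacle I foresee is precisely this lift to the $\AL$-level: matching both sides modulo $\hbar$ is immediate from Mebkhout--Kashiwara, but producing a natural morphism in the derived category of $\AL$-modules requires a duality formalism along a Lagrangian that is not laid out in the excerpt and must be imported from~\cite{KS12}*{Ch.~7}. Once that formalism is in hand, Proposition~\ref{prop:homcc}(b) together with the graded computation above close the argument.
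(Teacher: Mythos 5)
A preliminary remark: the paper does not prove Theorem~\ref{th:7110} at all --- it is recalled from \cite{KS12}*{Th.~7.1.10} --- so your proposal can only be measured against the proof given there, which it parallels in part. For (a) your strategy (pass to $\gr$, use cohomological completeness, Propositions~\ref{pro:SScar2b} and~\ref{pro:sscohco}) is the right one, but the concluding step has a genuine gap: boundedness together with a microsupport contained in a $\C^\times$-conic complex analytic Lagrangian only yields \emph{weak} $\C$-constructibility (\cite{KS90}*{\S~8.5}). Membership in $\Derb_\Cc(\Chbar_\Lambda)$ also requires every stalk to have finitely generated cohomology over $\Chbar$, which is part of the definition used in this paper and is nowhere addressed in your argument; it is precisely the point where the $\hbar$-adic structure must be used a second time. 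The missing step runs: by weak constructibility the stalk at $x$ of $F=\rhom[\AL](\shn,\shl)$ is $\rsect(B;F)$ for a small ball $B$, hence is cohomologically complete by Proposition~\ref{pro:cohcodirim}; its $\gr$ is the stalk of $\rhom[{\D[\Lambda]}](\gr\shn,\sho_\Lambda)$, which is finite dimensional by the constructibility theorem for holonomic $\shd$-modules \cite{Ka75}; then Theorem~\ref{th:formalfini2} applied on a point (i.e.\ \cite{KS12}*{Th.~1.6.1} for the ring $\Chbar$) gives finiteness over $\Chbar$. The same argument is needed for $\rhom[\AL](\shl,\shn)$. Two smaller debts: applying Proposition~\ref{prop:homcc}~(a) presupposes that the coherent $\AL$-complex $\shn$ is cohomologically complete (again \cite{KS12}*{Th.~1.6.1}, for $\AL$), and the identification $\gr\rhom[\AL](\shl,\shn)\simeq\rhom[{\D[\Lambda]}](\sho_\Lambda,\gr\shn)$ uses that $\shl$ is $\AL$-coherent without $\hbar$-torsion.

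For (b) the proposal does not contain a proof: the entire content of \cite{KS12}*{Th.~7.1.10~(ii)} is the construction of the natural morphism in~\eqref{eq:dualDhh}, which in loc.\ cit.\ comes from the kernel/dualizing-complex and Hochschild-class formalism for DQ-algebroids (see \cite{KS12}*{Ch.~2--3} and \cite{KS08}), and you explicitly defer exactly this point. What your outline establishes is only the standard reduction: both sides being cohomologically complete, a morphism whose $\gr$ is an isomorphism is an isomorphism, by applying Proposition~\ref{prop:homcc}~(b) to its cone --- note that this is weaker than your claim that a morphism is ``determined by its reduction modulo $\hbar$'', which is false. Finally, your shift bookkeeping is off: take $\shn=\shl$, so that $\rhom[{\D[\Lambda]}](\gr\shn,\sho_\Lambda)$ and $\rhom[{\D[\Lambda]}](\sho_\Lambda,\gr\shn)$ are both $\C_\Lambda$ in degree $0$; the duality you display, with its extra shift $[\dim_\C\Lambda]$, fails already in this example. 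The graded isomorphism to be matched is the shift-free local duality between these two complexes: the shift $[d_X]$ in~\eqref{eq:dualDhh} is exactly absorbed by the real codimension $d_X$ of $\Lambda$ in $X$ when $\RDD_X$ is applied to objects supported by $\Lambda$, leaving no residual $[\dim_\C\Lambda]$.
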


The crucial result in order to prove Theorem~\ref{th:723}  below is the following.

\begin{proposition}[{see~\cite{KS12}*{Prop.~7.1.16}}]\label{pro:7116}
Assume that $X$ is symplectic and $\Lambda$ is Lagrangian. For a coherent $\Ah$-module $\shm$, we have
\eqn
&&\codim\Chl(\shm)\ge\codim\Supp(\shm).
\eneqn
\end{proposition}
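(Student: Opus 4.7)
The key observation is that, since $\Lambda$ is Lagrangian in $X$, $\dim_\C T^*\Lambda = d_X = \dim_\C X$, so the claimed inequality is equivalent to $\dim\Chl(\shm)\le\dim\Supp(\shm)$. The plan is to realize both $\Chl(\shm)\subset T^*\Lambda$ and $\Supp(\shm)\subset X$ as the special and generic fibers of a single closed subvariety in the deformation to the normal cone of $\Lambda$ in $X$, and then conclude via a Cartier-divisor dimension count.

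The statement being local on $X$, I would work in a Lagrangian (Weinstein--Darboux) neighborhood, identifying $X$ with an open subset of $T^*\Lambda$ and $\AL$ with $\shd_\Lambda[[\hbar]]$ as in Example~\ref{exa:2}. Locally I would then choose an $\AL$-lattice $\shn$ of $\shm$ together with a good filtration $G_\bullet\shn$ compatible with the filtration $F_\bullet\AL$ of~\eqref{eq:newfilt}, and form the Rees module $\tilde\shn\eqdot\grr^G\shn$, coherent over the Rees algebra $R\eqdot\grr^F\AL$. By Lemma~\ref{le:grAL1}(c), $\Spec R$ is the deformation to the normal cone of $\Lambda$ in $X$: its open locus $\{T\neq 0\}$ is $X\times\mathbb{G}_m$, while its special fiber $\{T=0\}=\Spec(R/TR)$ is the normal bundle $N_\Lambda X$, which is identified with $T^*\Lambda$ via the symplectic form.

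Set $\tilde Y\eqdot\Supp\tilde\shn\subset\Spec R$. Its generic fiber $\tilde Y\cap\{T\neq 0\}$ identifies with $\Supp\shm\times\mathbb{G}_m$, using that $\shn$ generates $\shm$ and that $R[T^{-1}]$ recovers $\grr^F\Ah\simeq\sho_X[T,T^{-1}]$; hence the closure $\bar Y$ of $\tilde Y\cap\{T\neq 0\}$ in $\Spec R$ has dimension $\dim\Supp(\shm)+1$. Because $T$ is a non-zero divisor on every component of $\bar Y$ by construction, the intersection $\bar Y\cap\{T=0\}$ is a Cartier divisor in $\bar Y$ of dimension exactly $\dim\Supp(\shm)$; geometrically it is the normal cone $C_\Lambda\Supp(\shm)\subset T^*\Lambda$. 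The Lagrangian hypothesis gives $\gr\AL\simeq\shd_\Lambda$, hence $\grl\shn=\gr\shn$, and the support of $\grl\shn$ as a coherent sheaf on $T^*\Lambda$ equals $\chv(\grl\shn)=\Chl(\shm)$; provided $\tilde\shn$ has no $T$-torsion, this support is contained in $\bar Y\cap\{T=0\}$. Combining, $\dim\Chl(\shm)\leq\dim\Supp(\shm)$, which is the desired codimension inequality.

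The main obstacle is ruling out an extra component of $\tilde Y$ supported entirely over $\{T=0\}$, which would obstruct the Cartier-divisor dimension bound. This reduces to arranging that $\tilde\shn$ have no $T$-torsion, and it is achieved by taking $G_\bullet$ to be separated (Hausdorff); since $\chv(\grl\shn)$ depends only on $\shm$ and not on the choice of lattice or filtration, replacing $G_\bullet$ by its separated quotient is harmless. A secondary technical point is the careful identification of the $T=0$ fiber of the Rees module with $\grl\shn$ via the chain $R/TR\simeq\sho_{T^*\Lambda}$ combined with $\gr\AL\simeq\shd_\Lambda$, which is precisely the place where the Lagrangian hypothesis on $\Lambda$ enters.
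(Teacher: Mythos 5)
Note first that the paper does not prove this proposition at all: it is quoted verbatim from \cite{KS12}*{Prop.~7.1.16}, so your argument has to stand on its own. Your strategy is a natural one given Lemma~\ref{le:grAL1}: view $\grr^F\AL$ as the extended Rees algebra of $I_\Lambda$ (the deformation to the normal cone), put $\Chl(\shm)$ in the fibre $\{T=0\}$ and $\Supp(\shm)$ in the locus $\{T\neq0\}$ of the support of a Rees-type module $\grr^G\shn$, and conclude by the Krull/Cartier-divisor dimension count. Most of the outline is sound, and in fact you need less than you claim: for the upper bound only the inclusions $\Supp(\grr^G\shn)\cap\{T\neq0\}\subset\Supp(\shm)\times\{T\neq 0\}$ (immediate from $\shn\subset\shm$, since a lattice is a submodule) and $\Chl(\shm)\subset\Supp(\grr^G\shn)\cap\{T=0\}$ are required, not the identifications with $\Supp(\shm)\times\C^\times$ and with the normal cone, and ``dimension exactly $\dim\Supp(\shm)$'' should be ``at most'' (the intersection may even be empty, e.g.\ when $\Supp(\shm)\cap\Lambda=\emptyset$, in which case $\Chl(\shm)=\emptyset$ and there is nothing to prove).

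The genuine gap is precisely at the point you flag and then dismiss: the $T$-torsion. Taking $G_\bullet$ separated (Hausdorff), or passing to a ``separated quotient'', does not remove $T$-torsion from $\grr^G\shn$: a torsion class is produced by any $x\in G_k\setminus G_{k-1}$ with $\hbar x\in G_{k-2}$, and a separated good filtration can perfectly well have such elements, so the components of $\Supp(\grr^G\shn)$ inside $\{T=0\}$ are not excluded by your hypothesis and the Cartier-divisor bound collapses. The correct repair is to saturate the filtration, e.g.\ replace $G_k$ by $\{x\in\shn : \hbar^jx\in G_{k-j}\ \text{for some}\ j\ge0\}$; then $T$ is a non-zero divisor on the associated Rees module, but you must then verify (i) that the saturated filtration is again good, which uses the Noetherianity of $\grr^F\AL$ (Lemma~\ref{le:grAL1}) via an Artin--Rees type argument to bound the saturation locally, and (ii) that the filtration it induces on $\gr(\shn)=\shn/\hbar\shn$ is still a good filtration with respect to the order filtration of $\D[\Lambda]$, so that its associated graded computes $\Chl(\shm)$ --- here note that in general one only has a surjection from $(\grr^G\shn)/T(\grr^G\shn)$ onto the associated graded of $\shn/\hbar\shn$, which is exactly what you need, so state it as a containment rather than an identification. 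Without these steps the argument does not close; with them, your degeneration proof is a legitimate alternative to the reference you are asked to reproduce.
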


The next result is a variation on a classical theorem of~\cite{Ka75} on holonomic D-modules.

\begin{theorem}[{see~\cite{KS12}*{Th.~7.2.3}}]\label{th:723}
Assume that $X$ is symplectic. 
Let~$\shm$ and $\shn$ be two holonomic $\Ah$-modules. Then 
\bnum
\item
the object $\rhom[{\Ah}](\shm,\shn)$ belongs to~$\Derb_\Cc(\Chbarl_X)$,
\item
there is a canonical isomorphism:
\eq\label{eq:dualmorph1}
&&\rhom[{\Ah}](\shm,\shn)
\isoto\bl\RDD_X\rhom[{\Ah}](\shn,\shm)\br\,[d_X],
\eneq
\item
the object $\rhom[{\Ah}](\shm,\shn)[d_X/2]$ is perverse.
\enum
\end{theorem}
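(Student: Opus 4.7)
The plan is to reduce each of (i)--(iii) to the corresponding statement at the $\AL$-level (Theorem~\ref{th:7110}) by introducing local $\AL$-lattices for $\shm$ and $\shn$ and then localizing in $\hbar$. All three claims are local on $X$, so one may work in a neighborhood of a fixed point $x\in\supp(\shm)\cap\supp(\shn)$. In such a neighborhood, the plan is to choose a smooth Lagrangian submanifold $\Lambda\subset X$ such that both $\shm$ and $\shn$ admit $\AL$-lattices $\shn_0\subset\shm$ and $\shn_1\subset\shn$ in the sense of Definition~\ref{def:ALlattice}. By Proposition~\ref{pro:7116} and the Lagrangianity of $\supp(\shm)$, one has $\codim_{T^*\Lambda}\Chl(\shm)\ge d_X/2$, while Bernstein's inequality applied to $\grl(\shn_0)\in\Derb_\coh(\shd_\Lambda)$ (Corollary~\ref{cor:ayl0}) gives the reverse bound. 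Hence $\grl(\shn_0)$ is holonomic over $\shd_\Lambda$, i.e., $\shn_0$ is a holonomic $\AL$-module, and similarly for $\shn_1$.

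By Lemma~\ref{le:grAL1}(a), $\Ah\simeq(\AL)^\loc$; together with the lattice identities $\shn_0^\loc\simeq\shm$ and $\shn_1^\loc\simeq\shn$, and using the coherence of $\shn_0$ over $\AL$ to pass localization through $\rhom$, one obtains the canonical identification
\eqn
&&\rhom[{\Ah}](\shm,\shn)\simeq\rhom[\AL](\shn_0,\shn_1)^\loc.
\eneqn
Theorem~\ref{th:7110}(a) then gives $\rhom[\AL](\shn_0,\shn_1)\in\Derb_\Cc(\Chbar_\Lambda)$ with microsupport contained in the Lagrangian analytic set $\Chl(\shm)\subset T^*\Lambda$, and Proposition~\ref{pro:ssloc} transfers both the constructibility and the microsupport bound to the $\hbar$-localization, yielding (i) locally; the global conclusion follows since the support of $\rhom[{\Ah}](\shm,\shn)$ is a closed analytic subset of $X$. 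Statement (ii) is obtained by localizing the $\AL$-level duality of Theorem~\ref{th:7110}(b), using the compatibility of $\RDD_X$ with $(\scbul)^\loc$. Finally, (iii) follows from the microlocal characterization of perverse sheaves: $\rhom[{\Ah}](\shm,\shn)$ is $\C$-constructible over $\Chbarl_X$, has Lagrangian microsupport, and by (ii) is self-Verdier-dual up to the shift $[d_X]$, whence $\rhom[{\Ah}](\shm,\shn)[d_X/2]$ is perverse.

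The main obstacle is the very first step: producing a common smooth Lagrangian $\Lambda$ near $x$ supporting $\AL$-lattices for both $\shm$ and $\shn$ simultaneously. When $\supp(\shm)$ or $\supp(\shn)$ is singular at $x$, no single smooth Lagrangian will contain both supports locally. The standard remedy is a Noetherian induction on $\supp(\shm)\cup\supp(\shn)$: prove (i)--(iii) first on the smooth locus of this union, where an adequate $\Lambda$ and lattices are available, and then propagate across the singular strata by dévissage from distinguished triangles of good $\Ah$-modules, following the pattern of the symplectic proof in~\cite{KS12}*{\S~7}.
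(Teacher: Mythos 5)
Your proposal diverges from the actual proof (the paper quotes Theorem~\ref{th:723} from \cite{KS12}*{Th.~7.2.3} without reproving it) at the crucial reduction step, and this is where the first genuine gap lies. You apply Theorem~\ref{th:7110} to $\rhom[\AL](\shn_0,\shn_1)$ for two arbitrary holonomic lattices, but Theorem~\ref{th:7110} and Proposition~\ref{pro:SScar2b} only treat $\rhom[\AL](\shn,\shl)$ and $\rhom[\AL](\shl,\shn)$ where $\shl$ is \emph{the simple module} along $\Lambda$; nothing in the quoted toolkit gives constructibility, the microsupport bound by $\Chl(\shm)$, or the duality of Theorem~\ref{th:7110}~(b) for the Hom between two general holonomic $\AL$-modules, so both your (i) and your (ii) rest on a statement that has not been established. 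Moreover, the obstacle you then try to repair is not the real one: an $\AL$-lattice of $\shm$ does not require $\supp(\shm)\subset\Lambda$ (outside $\Lambda$ one has $\AL\simeq\Ah$, and locally any coherent $\Ah$-module admits an $\AL$-lattice since $\A\subset\AL$ and $\AL$ is Noetherian), so no common smooth Lagrangian containing both supports is needed, and the proposed Noetherian induction with d\'evissage across singular strata is not a substitute for the missing Hom-theorem. The proof in \cite{KS12} -- the same device invoked in this paper's proof of Theorem~\ref{th:Main1} -- instead uses the diagonal procedure: one passes to $X\times X^a$ and reduces to the case where the second module is $\shl_0^\loc$ with $\shl_0$ simple along the smooth Lagrangian diagonal; then only one lattice is needed and Theorem~\ref{th:7110}, Proposition~\ref{pro:7116} (your correct argument that the lattice is a holonomic $\AL$-module), and localization apply exactly as you wrote them.

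The second gap is in (iii): constructibility plus Lagrangian microsupport plus self-duality does \emph{not} characterize perversity. The microsupport of every object of $\Derb_\Cc(\Chbarl_X)$ is automatically $\C^\times$-conic Lagrangian, and for any constructible $G$ the object $F=G\oplus(\RDD_X G)[d_X]$ satisfies $F\isoto(\RDD_X F)[d_X]$ while $F[d_X/2]$ need not be perverse. To obtain perversity one must prove one half of the perversity conditions directly -- in \cite{KS12} this is a support (degree) estimate for the cohomology of $\rhom[{\Ah}](\shm,\shn)$, obtained from the codimension estimate of Proposition~\ref{pro:7116} -- and only then does the duality isomorphism \eqref{eq:dualmorph1} supply the complementary cosupport condition. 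So as written, (iii) does not follow from the ingredients you list even after the reduction step is repaired.
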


\section{Proof of the main theorems and an example}

\subsection{Proof of Theorem~\ref{th:Main1}}
In this subsection,  $X$ is again a complex Poisson manifold endowed with a \DQ-algebroid $\A$.

By using the diagonal procedure, we may assume that $\shl=\shl_0^\loc$ with $\shl_0$ an $\A[X]$-module simple along $\Lambda$. 
By the hypothesis, we may find an $\AL$-lattice $\shn$ of $\shm$. 
 Set 
\eq\label{eq:F}
&&F_0\eqdot \rhom[\AL](\shn,\shl_0),\quad F\eqdot \rhom[{\Ah[X]}](\shm,\shl)\simeq F^\loc.
\eneq
One knows by Theorem~\ref{th:723} that $F\vert_Y\in\Derb_\Cc(\Chbarl_{Y\cap\Lambda})$ and 
 one knows by Proposition~\ref{pro:SScar2b} and Corollary~\ref{cor:ayl0} that 
$\SSi(F_0)\times_{\Lambda}(\Lambda\cap Y)$ is Lagrangian and subanalytic in $T^*\Lambda$. 
Since $\SSi(F)\subset\SSi(F_0)$ by Proposition~\ref{pro:ssloc}, it remains to apply Corollary~\ref{cor:finite1}.

\subsection{Proof of Theorem~\ref{th:Main2}}

In this subsection, $X$ is a quasi-compact separated smooth algebraic variety over $\C$ endowed with the Zariski topology. For an algebraic variety $X$, one denotes by  $\Xan$ the  complex analytic manifold associated with $X$ and by $\rho\cl \Xan\to X$ the natural map. There is a natural morphism $\opb{\rho}\sho_X\to\sho_\Xan$ and it is well-known that this morphism is faithfully flat (cf~\cite{Se56}).

\Lemma\label{lem:analy}
Let $\A$ be a \DQ-algebroid on $X$.
Then there exists a \DQ-algebroid $\A[\Xan]$  on $X_\an$ 
  together with a  functor  $\opb{\rho}\A\to\A[\Xan]$.
Moreover such an $\A[\Xan]$ is unique up to a unique isomorphism.
\enlemma
\Proof
First, consider a star algebra $\sha=(\Oh,\star)$ on a smooth
 algebraic variety $X$. The star product is defined by a sequence of algebraic bidifferential operators $\{P_i\}_i$ (see~\cite{KS12}*{Def.~2.2.2}) and one defines 
a star algebra $\sha^\an=(\Oh[\Xan],\star)$ on $X_\an$
by using the same bidifferential operators.

There exists an open (for the Zariski topology) covering 
$X=\bigcup_{i\in I}U_i$ 
such that, for each $i$, there exists an object $s_i$ of the category $\A(U_i)$.
Then $\sha_i\seteq\shend(s_i)$ is a star algebra.
For $i,j\in I$, since $s_i\vert_{U_i\cap U_j}$ and $s_j\vert_{U_i\cap U_j}$
are locally isomorphic, there exists an open covering  $U_i\cap U_j=\bigcup_{a\in A_{ij}}U_{ij}^a$ such that setting 
 $U_{ij}=\bigsqcup_{a\in A_{ij}}U_{ij}^a$, there exist
 an isomorphism $\alpha_{ij}\cl s_i\vert_{U_{ij}}\isoto s_j\vert_{U_{ij}}$.
Then we have 
$$a_{ijk}\seteq\alpha_{ij}\alpha_{jk}\alpha_{ki}\in
\End(s_i\vert_{U_{ijk}})=\sha_i(U_{ijk}),$$
where $U_{ijk}=U_{ij}\times_XU_{jk}\times_XU_{ki}$.

Hence we have an isomorphism $\beta_{ij}\cl\sha_i\vert_{U_{ij}}
\isoto\sha_j\vert_{U_{ij}}$
defined by $\sha_i\ni a\mapsto \alpha_{ij}\circ a\circ \alpha_{ij}^{-1}\in\sha_j$.
Moreover they satisfy the compatibility condition:
$$\beta_{ij}\beta_{jk}\beta_{ki}=\Ad(a_{ijk})\in\End(\sha_i\vert_{U_{ijk}}).$$
Then the data
$(\st{U_i},\st{U_{ij}},\st{\sha_i},\st{\beta_{i.j}},\st{a_{ijk}})$
satisfies the compatibility condition.
Conversely, we can recover $\A$  from such data  (see \cite{KS12}).

On $(U_{i})_\an$ we can define $\sha_{i}^\an$.
Similarly we can extend  $\beta_{ij}$
to $\beta^{\an}_{ij}\cl
\sha_{i}^{\an}\vert_{(U_{ij})_\an}
\isoto\sha_{j}^{\an}\vert_{(U_{ij})_\an}$
Finally we have
$a_{ijk}\in\sha_i(U_{ijk})\subset\sha_{i}^\an\bl(U_{ijk})_\an\br$,
Then the data
$$(\st{(U_{i})_\an},\st{(U_{ij})_\an},\st{\sha_{i}^\an},\st{\beta^{\an}_{i.j}},\st{a_{ijk}})$$
satisfies the compatibility condition, and it defines
a DQ-algebroid $\A[X_\an]$ on $X_\an$.
\QED

\begin{proposition}\label{pro:flat}
The algebroid $\A[\Xan]$ is faithfully flat over $\opb{\rho}\A$.
\end{proposition}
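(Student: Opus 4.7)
The plan is to reduce to a stalkwise statement and invoke a standard lemma promoting faithful flatness modulo $\hbar$ to faithful flatness of the $\hbar$-adic rings.

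First I would observe that, because $\A[\Xan]$ is constructed from $\A$ via the same local star-product data (Lemma \ref{lem:analy}), we may work locally on $X$ in the Zariski topology and assume $\A = (\Oh[X], \star)$, whence $\A[\Xan] = (\Oh[\Xan], \star)$ is defined by the analytifications of the same bidifferential operators $\{P_i\}$. Since faithful flatness of a morphism of sheaves of rings can be checked on stalks, it suffices to show that for each $x \in \Xan$ the stalk map
\eqn
&&R\eqdot\sho_{X,\rho(x)}\forl\;\longrightarrow\;\sho_{\Xan,x}\forl\eqdot S
\eneqn
is faithfully flat. Modulo $\hbar$ this becomes the local ring morphism $\sho_{X,\rho(x)}\to\sho_{\Xan,x}$, which is faithfully flat by the classical result of Serre \cite{Se56}.

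Next I would apply the following lifting lemma: if $\phi\cl R\to S$ is a morphism of $\hbar$-adically complete, $\hbar$-torsion free, left and right Noetherian $\Chbar$-algebras with $\hbar$ central, and if the reduction $R/\hbar R\to S/\hbar S$ is faithfully flat, then $\phi$ itself is faithfully flat. Flatness reduces modulo $\hbar$ via the local criterion of flatness: the only potential Tor obstruction, $\mathrm{Tor}_1^R(R/\hbar R,S)$, vanishes because $S$ has no $\hbar$-torsion, and $\hbar$-adic completeness of $S$ promotes flatness-modulo-$\hbar$ to honest flatness over $R$. Faithfulness then follows by a Nakayama-type argument: if $S\tens_R M = 0$ for a finitely generated left $R$-module $M$, then $(S/\hbar S)\tens_{R/\hbar R}(M/\hbar M)=0$, so $M/\hbar M=0$ by faithful flatness of the reduction, and hence $M=0$ by $\hbar$-adic completeness and Nakayama.

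The main obstacle will be formulating and applying the lifting lemma rigorously in the noncommutative star-product setting. However, because $\hbar$ is central and the $\hbar$-adic filtration satisfies the Artin--Rees property for finitely generated modules on both sides (a consequence of the Noetherianness from Theorem \ref{th:ks125}), the standard commutative arguments for promoting flatness and faithfulness from the reduction to the completion should carry over verbatim.
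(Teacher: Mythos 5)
Your overall strategy — reduce to stalks, use Serre's theorem for the reduction mod $\hbar$, and promote faithful flatness from $\gr$ to the deformed algebras — is the same as the paper's, but the key lemma you invoke has a hypothesis that fails in this setting. The stalk of $\A$ at $\rho(x)$ is not $\sho_{X,\rho(x)}\forl$: it is the filtered colimit over neighborhoods $U$ of $\sho_X(U)\forl$ (germs of $\hbar$-series, not series of germs), and likewise for $\A[\Xan]$. Such stalks are not $\hbar$-adically complete: a series $\sum_n a_n\hbar^n$ whose coefficients are defined only on shrinking neighborhoods of $x$ lies in the $\hbar$-adic completion of the stalk but not in the stalk itself. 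So your lifting lemma, whose hypotheses include $\hbar$-adic completeness of $R$ and $S$, does not apply to the actual stalk map $\sha_{X,\rho(x)}\to\sha_{\Xan,x}$; applied instead to the completed rings $\sho_{X,\rho(x)}\forl\to\sho_{\Xan,x}\forl$ it proves a different statement, and descending faithful flatness from the completions back to the stalks would require, e.g., that $\sho_{\Xan,x}\forl$ be faithfully flat over $\sha_{\Xan,x}$ — a claim of the same depth as the proposition itself. The Artin--Rees/Nakayama steps you sketch likewise rely on completeness and do not carry over verbatim to these non-complete rings.

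This is exactly why \cite{KS12} works with cohomological completeness rather than $\hbar$-adic completeness: the stalks $\sha_{\Xan,x}$ are cohomologically complete even though they are not adically complete, and \cite{KS12}*{Cor.~1.6.7} provides the flatness criterion in that generality. The paper's proof is your argument with the completeness hypothesis corrected: $\sha_{X,x}/\hbar\sha_{X,x}\simeq\sho_{X,x}$ is Noetherian, $\sha_{\Xan,x}$ is cohomologically complete, and $\sha_{\Xan,x}/\hbar\sha_{\Xan,x}\simeq\sho_{\Xan,x}$ is faithfully flat over $\sho_{X,x}$ by Serre, whence $\sha_{\Xan,x}$ is faithfully flat over $\sha_{X,x}$. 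To repair your write-up, replace $\hbar$-adic completeness by cohomological completeness in your lemma and cite (or reprove) the corresponding criterion; the stalkwise reduction itself is fine and is also the paper's first step.
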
 
\begin{proof}
It is enough to prove that for each $x\in X$,  $\sha_{\Xan,x}$ is faithfully flat over $\sha_{X,x}$. 
This follows from~\cite{KS12}*{Cor.~1.6.7} since 
$\sha_{X,x}/\hbar\sha_{X,x}\simeq\sho_{X,x}$ is Noetherian,
$\sha_{\Xan,x}$ is cohomologically complete and finally  
 $\sha_{\Xan,x}/\hbar\sha_{\Xan,x}\simeq\sho_{\Xan,x}$ is faithfully flat over $\sho_{X,x}$. 
\end{proof}

For an $\A$-module $\shm$ we set
\eqn
&&\shm_\an\eqdot \A[\Xan]\tens[\opb{\rho}\A]\opb{\rho}\shm.
\eneqn

\begin{proof}[Proof of Theorem~\ref{th:Main2}]
As in the proof of Theorem~\ref{th:Main1}, we may assume that $\shl\simeq\shl_0^\loc$ where $\shl_0$ is a simple $\A$-module along a smooth algebraic Lagrangian manifold $\Lambda$, the module $\shm$  remaining algebraically good. Choose an $\AL$-lattice $\shn$ of $\shm$. Let 
\eq\label{eq:Fan}
&&F_\an\eqdot\rhom[{\Ah[\Xan]}](\shm_\an,\shl_\an)\simeq \rhom[\ALan](\shn_\an,(\shl_0)_\an)^\loc. 
\eneq
By Proposition~\ref{pro:SScar2b} we know that 
$\SSi(F_\an)\subset\chv(\grl\shn_\an)$ and this set is contained in $\chv(\grl\shn)$ which is an algebraic Lagrangian subvariety of $T^*\Lambda$. To conclude, apply Corollary~\ref{cor:finite2}. 
\end{proof}

\begin{remark}\label{rem:2simple}
(i) If one assumes that $\shm$ and $\shl$ are simple modules along two smooth algebraic varieties $\Lambda_1$ and $\Lambda_2$ of $X$, which is the situation appearing  in~\cite{GJS19}, there is a much simpler  proof. Indeed, it follows from~\cite{KS12}*{Th.~7.4.3} that in this case
\eq\label{eq:SSFW}
&&\SSi(F)\subset C(\Lambda_1,\Lambda_2),
\eneq
the Whitney normal cone of $\Lambda_1$ along $\Lambda_2$ and this set is algebraic. Hence, it remains to apply Corollary~\ref{cor:finite2}. Note that Th.~7.4.3 of loc.\ cit.\ is a variation on~\cite{KS08}. 

\vspace{3ex}\noindent
(ii) Also remark that~\eqref{eq:SSFW} is no more true in the general case of irregular holonomic modules and until now, there is no estimate of $\SSi(F)$, except of course,  the fact that it is a Lagrangian set.
\end{remark}

\subsection{An example}
Consider the Poisson manifold $X=\C^4$ with coordinates $(x_1,x_2,y_1,y_2)$, the Poisson bracket being defined by:
\eq\label{eq:poisson1}
&&\ba{c}
\{x_1,x_2\}=0,\, \{y_1,x_1\}=\{y_2,x_2\}=x_1, \\
\{y_1,y_2\}=y_2, \{y_1,x_2\}=y_2, \{y_1,x_2\}=\{y_2,x_1\}=0.
\ea\eneq
Denote by $\A$ the \DQ-algebra defined by the relations $y_1=\hbar x_1\partial_{x_1}$, $y_2=\hbar x_1\partial_{x_2}$, that is,
\eq\label{eq:poisson2}
&&\ba{c}
[x_1,x_2]=0,\, [y_1,x_1]=[y_2,x_2]=\hbar x_1, [y_1,y_2]=\hbar y_2, \\
{} [y_1,x_2]=\hbar y_2, [y_1,x_2]=[y_2,x_1]=0.
\ea
\eneq
Hence, $Y=\{x_1\neq0\}$ is the symplectic locus  $X\setminus \Xns$ 
of the Poisson manifold $X$. Set $\Lambda=\{y_1=y_2=0\}$. Then $\Lambda\cap Y$ is Lagrangian in $Y$. 

Define the $\A$-module $\shl$ by $\shl=\A\cdot u$ with the relations $y_1u=y_2u=0$. Then $\shl\simeq\sho_\Lambda^\hbar$ and for $a(x)\in\sho_\Lambda^\hbar$, one has
\eqn
&&\left\{\parbox{75ex}{
$y_1a(x)u=\hbar x_1\frac{\partial a}{\partial x_1} u$\\
$y_2a(x)u=\hbar x_1\frac{\partial a}{\partial x_2} u.$
}\right.
\eneqn
Now define the left $\A$ module $\shm$ by  $\shm=\A\cdot v$ with the relations $(y_1+\hbar)v =y_2v=0$. 
Then  the complex below, in which the operators act on the right
\eq\label{eq:resolution1}
&&\xymatrix{
0&\shm\ar[l]&\A\ar[l]&&\A^{\oplus 2}\ar[ll]^-
{\small\scbuld\left( \begin{array}{cc}y_1+\hbar\\ y_2\end{array}\right)}
&&\A\ar[ll]^-{\scbuld(y_2,-y_1)}&0\ar[l] 
}\eneq
is a free resolution of $\shm$. 

Hence, the object  $\rhom[\A](\shm,\shl^\loc)$ is represented by the complex  (the operators act on the left)
\eq\label{eq:resolution2}
&&\xymatrix{
0\ar[r]&\Ohl[\Lambda]\ar[rr]_-
{\small\left( \begin{array}{cc}x_1\partial_{x_1} +1\\ x_1\partial_{x_2}\end{array}\right)\scbuld}
&&(\Ohl[\Lambda])^{\oplus 2}\ar[rr]_-{(x_1\partial_{x_2}, -x_1\partial_{x_1})\scbuld}&&\Ohl[\Lambda]\ar[r]&0.
}\eneq

Since $x_1\partial_{x_1}\Ohl[\Lambda]+x_1\partial_{x_2}\Ohl[\Lambda]
=x_1\Ohl[\Lambda]$ and
$\Ohl[\Lambda]/x_1\Ohl[\Lambda]\simeq \Ohl[\Lambda\cap\{x_1=0\}]$, we have

\eqn
&&\ext[\A]{2}(\shm,\shl^\loc)\simeq\Ohl[\Lambda\cap\{x_1=0\}].
\eneqn
This example shows that $\rhom[\A](\shm,\shl^\loc)$ does not belong to $\Derb_\Cc(\Chbarl)$.

\begin{bibdiv}
\begin{biblist}

\bib{Ga81} {article}{
author={Gabber, Ofer},
title={The integrability of the characteristic variety},
journal={Amer. Journ. Math.},
volume= {103},  
year={1981},
pages={ 445--468}
}

\bib{GJS19}{article}{
author={Gunningham, Sam},
author={Jordan, David},
author={Safronov, Pavel},
title={The finiteness conjecture for Skein modules},
year={2019},
eprint={arXiv:1908.05233}
}

\bib{Ka75}{article}{
author={Kashiwara, Masaki},
title={On the maximally overdetermined systems of linear differential equations I},
journal={Publ. Res. Inst. Math. Sci.},
volume={10},
date={1975},
pages={563-579}
}

\bib{Ka96} {article}{
author={Kashiwara, Masaki},
title={Quantization of contact manifolds}, 
journal={Publ. RIMS, Kyoto Univ.},
volume={32},
year={1996},
pages={1--5}
}

\bib{Ka03}{book}{
   author={Kashiwara, Masaki},
   title={$D$-modules and microlocal calculus},
series={Translations of Mathematical Monographs},
   volume={217},
   publisher={American Mathematical Society, Providence, RI},
   date={2003},
   pages={xvi+254},
}

\bib{Ko03} {article}{
author={Kontsevich, Maxim},
title={Deformation quantization of Poisson manifolds},
journal={Lett. Math. Phys.},
volume={66}, 
pages={157--216},
year={2003}
}

\bib{Ko01} {article}{
author={Kontsevich, Maxim},
title={Deformation quantization of algebraic varieties},
journal={Lett. Math. Phys.},
volume={56}, 
pages={271--294},
year={2001}
}

\bib{KS90}{book}{
  author={Kashiwara, Masaki},
  author={Schapira, Pierre},
  title={Sheaves on Manifolds},
  series={Grundlehren der Mathematischen Wissenschaften},
  volume={292},
  publisher={Springer-Verlag,  Berlin},
  date={1990},
  pages={x+512},
}

 \bib{KS08} {article}{
 author={Kashiwara, Masaki},
 author={Schapira, Pierre},
title={Constructibility and duality for simple holonomic modules on complex symplectic manifolds},
journal={Amer. J. Math},
 volume={130}, 
 pages={207--237},
date={2008}
 }

\bib{KS12}{book}{
   author={Kashiwara, Masaki},
   author={Schapira, Pierre},
   title={Deformation quantization modules},
   series={Ast\'erisque},
   volume={345},
   publisher={Soc. Math. France},
   date={2012},
   pages={vi+147},
}

\bib{Se56}{article}{
author={Serre, Jean-Pierre},
title={G{\'e}om{\'e}trie alg{\'e}brique et g{\'e}om{\'e}trie analytique},
journal={Ann. Institut Fourier de Grenoble},
volume={6},
pages={1-42},
year={1956}
}

\end{biblist}
\end{bibdiv}
\vspace*{1cm}
\noindent
\parbox[t]{21em}
{\scriptsize{
\medskip\noindent
Masaki Kashiwara\\
Kyoto University Institute for Advanced Study,\\
Research Institute for Mathematical Sciences\\
Kyoto University, 606--8502, Japan\\
and\\
Department of Mathematical Sciences and School of Mathematics,\\
Korean Institute for Advanced Studies, \\
Seoul 130-722, Korea

\medskip\noindent
Pierre Schapira\\
Sorbonne University, CNRS IMJ-PRG\\
4, place Jussieu F-75005 Paris France \\
e-mail: pierre.schapira@imj-prg.fr\\
http://webusers.imj-prg.fr/\textasciitilde pierre.schapira/
}}

\end{document}